\numberwithin{equation}{section} \overfullrule 5pt
\newdimen\tableauside\tableauside=1.0ex
\newdimen\tableaurule\tableaurule=0.4pt
\newdimen\tableaustep
\def\phantomhrule#1{\hbox{\vbox to0pt{\hrule height\tableaurule
width#1\vss}}}
\def\phantomvrule#1{\vbox{\hbox to0pt{\vrule width\tableaurule
height#1\hss}}}
\def\sqr{\vbox{%
  \phantomhrule\tableaustep

\hbox{\phantomvrule\tableaustep\kern\tableaustep\phantomvrule\tableaustep}%
  \hbox{\vbox{\phantomhrule\tableauside}\kern-\tableaurule}}}
\def\squares#1{\hbox{\count0=#1\noindent\loop\sqr
  \advance\count0 by-1 \ifnum\count0>0\repeat}}
\def\tableau#1{\vcenter{\offinterlineskip
  \tableaustep=\tableauside\advance\tableaustep by-\tableaurule
  \kern\normallineskip\hbox
    {\kern\normallineskip\vbox
      {\gettableau#1 0 }%
     \kern\normallineskip\kern\tableaurule}%
  \kern\normallineskip\kern\tableaurule}}
\def\gettableau#1 {\ifnum#1=0\let\next=\null\else
  \squares{#1}\let\next=\gettableau\fi\next}
\newcommand{\be}{\begin{equation}}
\newcommand{\ee}{\end{equation}}
\newcommand{\bea}{\begin{eqnarray}}
\newcommand{\eea}{\end{eqnarray}}
\newcommand{\ba}{\begin{array}}
\newcommand{\ea}{\end{array}}
\newcommand{\id}{\hbox{1\kern-.27em l}}
\newcommand{\la}{\lambda}
\newcommand{\cN}{\mathcal{N}}
\newcommand{\non}{\nonumber}
\newcommand{\SO}{\mathrm{SO}}
\newcommand{\Sp}{\mathrm{Sp}}
\newcommand{\Spin}{\mathrm{Spin}}
\newtheorem{Lem}{Lemma}
\newtheorem{Pro}{Proposition}
\newtheorem{remark}{Remark}[section]
\newtheorem{thm}{Theorem}[section]
\newtheorem{lem}[thm]{Lemma}
\theoremstyle{definition}
\title{Fingerprint Invariant  of  Partitions and   Construction}
\author[Bao. Shou]{Bao. Shou$^{\dag}$}
\author[Qiao Wu]{Qiao Wu$^*$}
\address{$^*$ College of Logistics and E-commerce \\Zhejiang Wanli University\\
No.8 South Qianhu Road, Ningbo 315100, P.R.China}
\address{$\dag$Center  of Mathematical  Sciences\\Zhejiang University \\
Hangzhou 310027,  China}
\email{$^{\dag}$ bsoul@zju.edu.cn, \quad $^*$10920005@zju.edu.cn}
\subjclass[2010]{05E10}
\keywords{Partition, fingerprint, Kazhdan-Lusztig map, construction, rigid semisimple operator}
\begin{document}
\begin{abstract} 
The fingerprint invariant of partitions can be   used to describe the Kazhdan-Lusztig map for the classical groups.  We discuss the basic properties of  fingerprint. We construct   the fingerprints of rigid partitions in the $B_n$, $C_n$, and $D_n$ theories. To calculate the fingerprint of a rigid semisimple operator $(\lambda^{'};\lambda^{''})$, we decompose $\lambda^{'}+\lambda^{''}$ into several blocks.  We define operators to calculate the fingerprint  for each block  using the results of fingerprint of the unipotent operators.
\end{abstract}
\maketitle

\tableofcontents

\section{Introduction}
The fingerprint invariant of partitions  is used to describe the Kazhdan-Lusztig map for the classical groups \cite{Lusztig:1984}\cite{Symbol 2}. The Kazhdan-Lusztig map  is a map from the unipotent conjugacy classes to the set of conjugacy classes of the Weyl group. It can be extended to the case of rigid semisimple conjugacy classes\cite{Spaltenstein:1992}. The rigid semisimple conjugacy classes and  the conjugacy classes of the Weyl group are described by pairs of partitions $(\lambda^{'};\lambda^{''})$   and pairs of partitions  $[\alpha;\beta]$, respectively. The fingerprint invariant  is  a map between these two classes of objects.

Gukov and Witten  initiated to  study the $S$-duality for surface operators in $\cN=4$ super Yang-Mills theories in the ramified case of the Geometric Langlands Program in \cite{Gukov:2006}\cite{Witten:2007}\cite{Gukov:2008}. Surface operators are labeled by   pairs of certain partitions.  The $S$-duality conjecture suggest that surface operators in  the theory with gauge group $G$ should have a counterpart  in the Langlands dual group $G^{L}$. The {\it 'rigid'} surface operators  corresponding to pairs of rigid partitions are expected to be closed under $S$-duality. Fingerprint  is an invariant for the dual pairs partitions.

There is another invariant of partitions called symbol related to the Springer correspondence\cite{Collingwood:1993}.  By using  symbol invariant, Wyllard made some explicit proposals for how the $S$-duality maps should act on rigid surface operators in \cite{Wyllard:2009}. In \cite{Shou 1:2016}, we find a new subclass of rigid surface operators related by S-duality.  In \cite{Shou 2:2016}, we found a construction  of  symbol for the rigid partitions in the  $B_n, C_n$, and $D_n$ theories. Compared to the symbol invariant, the calculation of fingerprint is a complicated and tedious work. In this paper, we attempt to continue  the analysis in \cite{Gukov:2006}\cite{Wyllard:2009}\cite{Shou 1:2016} \cite{Shou 2:2016}, studying  the construction of fingerprint.

With noncentral rigid conjugacy classes in $A_n$  and  more complicated exceptional groups,  we will concentrate on the $B_n, C_n$, and $D_n$ series in this paper.   The fingerprint invariant is   assumed to be  equivalent to the symbol invariant \cite{Wyllard:2009}. Hopefully our constructions would  be helpful to prove  the equivalence\cite{iv}. Clearly more work is required.

The following is an outline of  this article.  In Section 2, we  introduce  some basic results related to the rigid partition and concept of fingerprint.  In Section 3, we calculate the fingerprint of the unipotent operators in the $B_n, C_n$, and $D_n$ theories. In Section 4, we calculate the fingerprint of rigid semisimple operators $(\lambda^{'};\lambda^{''})$. We decompose $\lambda=\lambda^{'}+\lambda^{''}$ into several blocks.   For $I$ type block,   we define  operators $\mu_{e1}$, $\mu_{e2}$ and their odd versions $\mu_{o1}$, $\mu_{o2}$. For $II$ type blocks,   we define  operator $\mu_{II}$. For $III$ type blocks,   we define  operators $\mu_{e11}$, $\mu_{e12}$, $\mu_{e21}$,  $\mu_{e22}$ and their odd versions $\mu_{o11}$, $\mu_{o12}$, $\mu_{o21}$,  $\mu_{o22}$. For $S$ type blocks, the fingerprint should be calculated from case to case.

\section{Fingerprint of rigid partitions}
In this section, we introduce rigid partitions in the $B_n$, $C_n$, and $D_n$ theories.  Then  we introduce the definition of  fingerprint\cite{Wyllard:2009}.
\subsection{Rigid Partitions in the  $B_n, C_n$, and $D_n$ theories}
A partition $\la$ of the positive integer $n$ is a decomposition $\sum_{i=1}^l \la_i = n$  ($\la_1\ge \la_2 \ge \cdots \ge \la_l$), with the length  $l$. Partitions are in a one to one correspondence  with Young tableaux.  For instance the partition $3^22^31$  corresponds to
\be
\tableau{2 5 7}\non
\ee
Young tableaux occur in a number of branches of mathematics and physics. They  are also  tools for the construction of  the eigenstates of Hamiltonian System \cite{Shou:2011} \cite{{Shou:2014}} \cite{{Shou:2015}} and  label  the fixed point of the localization of path integral\cite{Localization}.


For the $B_n$($D_n$)theories, unipotent conjugacy  classes  are in one-to-one correspondence with partitions of $2n{+}1$($2n$) where all even integers appear an even number of times.  For the $C_n$ theories, unipotent conjugacy  classes  are in one-to-one correspondence with partitions $2n$ for which all odd integers appear an even number of times.  If it has no gaps (i.e.~$\la_i-\la_{i+1}\leq1$ for all $i$) and no odd (even) integer appears exactly twice, a partition in the $B_n$ or $D_n$ ($C_n$) theories is called {\it rigid}. We will focus on rigid partition in this paper.

The following facts  plays an important role in this study.
\begin{Pro}{\label{Pb}}
The longest row in a rigid  $B_n$ partition always contains an odd number of boxes. The following two rows of the first row are either both of odd length or both of even length.  This pairwise pattern then continues. If the Young tableau has an even number of rows the row of shortest length has to be even.
\end{Pro}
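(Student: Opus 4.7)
The plan is to translate the proposition into a statement about the multiplicities of the distinct parts of $\lambda$, and then verify it from the defining conditions of a rigid $B_n$ partition together with the parity of $|\lambda|=2n+1$. Write $\lambda=(d_1^{m_1},d_2^{m_2},\ldots,d_s^{m_s})$ in decreasing order. The no-gap condition forces $d_{i+1}=d_i-1$, so the distinct parts are consecutive integers alternating in parity; the orthogonal condition gives $m_i$ even (hence $\geq 2$) whenever $d_i$ is even; rigidity gives $m_i \neq 2$ whenever $d_i$ is odd. Since consecutive rows differ in length by at most one, two rows have the same parity if and only if they have the same length, so the pairing claim of the proposition amounts to $\lambda_{2k}=\lambda_{2k+1}$ for every valid $k\geq 1$, which in turn translates to the block-transition positions $m_1,\,m_1+m_2,\ldots,m_1+\cdots+m_{s-1}$ all being odd. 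Combined with the first-row claim, this is equivalent to: $d_1$ and $m_1$ are odd, and $m_2,\ldots,m_{s-1}$ are all even.

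For the proof proper, I would begin by observing that the mod-two reduction of $|\lambda|=2n+1$ gives $\sum_{i:\,d_i\text{ odd}} m_i\equiv 1\pmod 2$, which in particular implies the total number of rows $l$ is odd (so the auxiliary claim about an even number of rows is vacuous). I would then use induction on the number of distinct parts $s$. The base case $s=1$ is immediate from $d_1 m_1 = 2n+1$, which forces both factors odd. For the inductive step, I would first show that $d_1$ must be odd, since an even $d_1$ would force $m_1\geq 2$ even and the resulting block boundary would misalign with the pairing, and that $m_1$ must be odd; then peeling off the top block (or the top block together with the first pair of the second block, as dictated by how the pairing straddles the transition from $d_1$ to $d_2$) produces a smaller rigid configuration to which the induction applies. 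Maintaining the pairing across each transition from $d_i$ to $d_{i+1}$ is what forces $m_i$ to be even for every middle $i$.

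The main obstacle I anticipate is the initial parity pinning at the top. The mod-two identity by itself only says that an odd number of odd-$d_i$ blocks carry odd multiplicity, so establishing that the $d_1$-block is specifically the one which forces $d_1$ odd requires a delicate case analysis. The rigidity condition $m_i\neq 2$ for odd $d_i$ must be used in concert with the orthogonal parity constraint on even $d_i$ to rule out configurations where the parity transitions straddle pairing boundaries; it is precisely the exclusion of multiplicity two for odd parts that keeps the parity structure aligned, and capturing this interaction cleanly, rather than by exhaustive casework on the top two blocks, is the technically demanding part of the argument.
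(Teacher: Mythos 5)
Your proposal founders on a misreading of what the proposition means by a ``row''. You take the rows of the Young tableau to be the parts $\lambda_1\geq\lambda_2\geq\cdots$ themselves, so that the ``longest row'' is $\lambda_1$ and the pairing claim becomes $\lambda_{2k}=\lambda_{2k+1}$. Under that reading the statement is simply false: $\lambda=(2,2,1)$ is a rigid $B_2$ partition (the even part $2$ has even multiplicity, there are no gaps, and no odd integer occurs exactly twice), yet its largest part is even and its second and third parts $2,1$ have opposite parities. Your reformulation ``$d_1$ and $m_1$ are odd and $m_2,\dots,m_{s-1}$ are even'' is therefore not a theorem, and the step you flag as technically demanding --- pinning $d_1$ odd at the top --- is not demanding but impossible. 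In this paper the diagram of $\lambda=m^{n_m}(m-1)^{n_{m-1}}\cdots 1^{n_1}$ is drawn with the parts as \emph{columns}: the $j$th row sits at height $j$ and has length $L(j)=\sum_{i\geq j}n_i=\#\{i:\lambda_i\geq j\}$, i.e.\ the rows are the parts of the transpose partition. This is visible in Section 2.3, where ``the height of the $a$th row is $a$'' and the $a$th row ends at column $L(a)$. Your observation that the number of rows is always odd (rendering the final clause vacuous) is another symptom of the same misreading: the number of rows is $\lambda_1$, which equals $2$ for $(2,2,1)$.

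Under the correct reading the proposition is a short parity count that needs neither induction nor the rigidity conditions, only the $B_n$ condition that every even integer has even multiplicity. The length of the $j$th row satisfies $L(j)=\sum_{i\geq j}n_i\equiv\sum_{i\geq j,\ i\ \mathrm{odd}}n_i\pmod 2$. Rows $2k$ and $2k+1$ differ in length by $L(2k)-L(2k+1)=n_{2k}$, which is even because $2k$ is even, so they have equal parity; the first row has length congruent mod $2$ to the number of odd parts, hence to $|\lambda|=2n+1$, hence is odd; and if the number of rows $\lambda_1$ is even, the shortest row has length $n_{\lambda_1}$, which is even. (The no-gap condition enters only to ensure each $n_j\geq 1$, so that the row lengths strictly decrease.) Your mod-two bookkeeping with the multiplicities is exactly the right raw material, but it must be applied to the column counts $\sum_{i\geq j}n_i$ rather than to the parts $\lambda_i$; once that substitution is made, the delicate top-block case analysis you anticipate disappears entirely.
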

\begin{flushleft}
\textbf{Remark:} If the last row of the partition is odd, the number of rows is odd.
\end{flushleft}

\begin{Pro}{\label{Pd}}
For a rigid $D_n$ partition, the longest row  always contains an even number of boxes. And the following two rows are either both of even length or both of odd length. This pairwise pattern then continue. If the Young tableau has an even number of rows the row of the shortest length has to be even.
\end{Pro}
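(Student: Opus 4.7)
The plan is to mirror the proof of Proposition \ref{Pb}, swapping the roles of odd and even to reflect that $|\lambda| = 2n$ rather than $2n+1$. Let $\lambda$ be a rigid $D_n$ partition, let $v_1 > v_2 > \cdots > v_r$ be its distinct part values, and let $m_i$ denote the multiplicity of $v_i$. The no-gap hypothesis $\lambda_i - \lambda_{i+1} \le 1$ forces $v_i = v_1 - (i-1)$, so the parities along the chain alternate. The $D_n$ class condition forces $m_i$ to be even whenever $v_i$ is even, rigidity forces $m_i \neq 2$ whenever $v_i$ is odd, and $\sum_i v_i m_i = 2n$.

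For the first claim, that $\lambda_1 = v_1$ is even, I would reduce $\sum_i v_i m_i = 2n$ modulo $2$. Blocks with $v_i$ even contribute $0$ because $m_i$ is even, while blocks with $v_i$ odd contribute $m_i \bmod 2$. Hence the number of blocks with $v_i$ odd and $m_i$ odd must itself be even. Tracking this global parity constraint along the alternating parity chain $v_1,v_2,\ldots,v_r$, together with the exclusion $m_i \neq 2$ for odd $v_i$, pins down the parity of $v_1$.

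For the pairwise parity pattern on the remaining rows, the key observation is that the parity of the entries can change only at a block boundary, since within a single block the value, and hence the parity, is constant. Stripping off the top row $\lambda_1$, I would argue inductively on the tail block structure that each block boundary lands so that every pair $(\lambda_{2k}, \lambda_{2k+1})$ either sits inside one block or straddles two adjacent blocks of the same parity. The case split that makes this precise is between odd $v_i$ with $m_i = 1$, which contributes a single unpaired row and can shift the pair alignment, and odd $v_i$ with $m_i \ge 3$, which contains enough rows for the pair structure to be preserved; the even $v_i$ case is immediate because $m_i$ is even. For the final clause, if the total number $l$ of rows is even, then after removing $\lambda_1$ an odd number of rows remain, so the pairwise pattern forces a single unpaired row at the bottom; the parity analysis applied to the smallest block $v_r$ then forces $\lambda_l$ to be even, exactly parallel to the corresponding step in Proposition \ref{Pb}.

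The main obstacle I expect is the second step: controlling where block boundaries can fall relative to the pair indices $(2k, 2k+1)$. The rigidity conditions are just strong enough to make the pairing consistent, but one has to single out the odd $v_i$ blocks with $m_i = 1$, since these are the only blocks that can desynchronize pair boundaries, and verify that their occurrences line up in positions compatible with the global parity constraint extracted in the first step.
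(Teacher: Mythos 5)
Your argument addresses a different statement from the one Proposition \ref{Pd} is actually making, and the statement you set out to prove is false, so the attempt cannot be repaired along the lines you propose. In this paper's convention the Young tableau of $\lambda=m^{n_m}(m-1)^{n_{m-1}}\cdots 1^{n_1}$ has the parts $\lambda_i$ as \emph{column} heights; this is what makes Section 2.3 work, where the $l$th row has length $L(l)$ (a partial sum of the multiplicities $n_i$) and ends at the column of height $l$. So the ``rows'' in Proposition \ref{Pd} are the parts of the transposed partition, $\lambda^{T}_{j}=\#\{i:\lambda_i\ge j\}$, not the parts $\lambda_i$ themselves. Under your reading (longest row $=\lambda_1=v_1$, pairs $(\lambda_{2k},\lambda_{2k+1})$ of consecutive parts) the proposition is simply false: $[3,2,2,1]$ and $[2^2 1^4]$ are both rigid $D_4$ partitions; the first has largest part $3$, which is odd, and the second has $(\lambda_2,\lambda_3)=(2,1)$ of opposite parities. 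This is also exactly where your first step breaks down: reducing $\sum_i v_i m_i=2n$ modulo $2$ yields only that the number of odd parts is even, and that constraint does not determine the parity of $v_1$ --- the two examples above both satisfy it while having $v_1$ of opposite parities. The block-boundary induction of your second step fails on the same examples.

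Read correctly (on the transpose), the proposition has a short proof that uses only the $D_n$ class condition, not rigidity. The first row is $\lambda^{T}_{1}=\sum_{l}n_l$: the even parts contribute an even total because each even $l$ has $n_l$ even, and the number of odd parts is even because $|\lambda|=2n\equiv\#\{\textrm{odd parts}\}\pmod 2$; hence $\lambda^{T}_{1}$ is even. For the pairwise pattern, $\lambda^{T}_{2k}-\lambda^{T}_{2k+1}=n_{2k}$ is the multiplicity of the even number $2k$ and is therefore even, so rows $2k$ and $2k+1$ have equal parity. The tableau has $\lambda_1$ rows, and if $\lambda_1$ is even the shortest row is $\lambda^{T}_{\lambda_1}=n_{\lambda_1}$, which is even because $\lambda_1$ is then an even part. (The paper states Propositions \ref{Pb}--\ref{Pc} without proof, so there is no argument in the text to compare with; note that the same transposition issue affects your implicit reading of Proposition \ref{Pb}, for which $[2,2,1]$ in $B_2$ is the analogous counterexample.)
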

Examples of partitions in the $B_n$ and  $D_n$ theories are shown in Fig.(\ref{BD}).
     \begin{figure}[!ht]
  \begin{center}
    \includegraphics[width=4.8in]{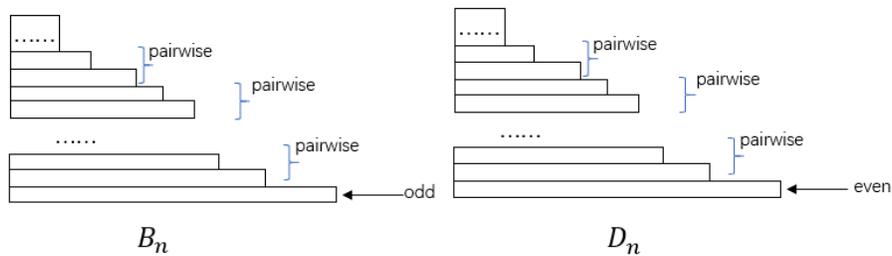}
  \end{center}
  \caption{ Partitions in the $B_n$    and  $D_n$  theories. }
  \label{BD}
\end{figure}

\begin{Pro}{\label{Pc}}
For a rigid $C_n$ partition, the longest two rows  both contain either  an even or an odd number  number of boxes.  This pairwise pattern then continues. If the Young tableau has an odd number of rows the row of shortest length has contain an even number of boxes.
\end{Pro}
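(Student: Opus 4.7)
The plan is to work with the block decomposition of $\lambda$, writing $\lambda = (a_1^{m_1}, a_2^{m_2}, \ldots, a_k^{m_k})$ with $a_1 > a_2 > \cdots > a_k \geq 1$ and multiplicities $m_i \geq 1$. The no-gaps hypothesis forces the distinct values to form a consecutive integer interval, so $a_i - a_{i+1} = 1$ for each $i < k$, and the parities of the $a_i$ therefore strictly alternate. The $C_n$ unipotent constraint makes $m_i$ even whenever $a_i$ is odd, while the rigidity hypothesis excludes $m_i = 2$ when $a_i$ is even. These three structural facts, rather than the list of rows itself, will drive the argument.

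The first and main step is the key lemma: every non-terminal block has even multiplicity, i.e.\ $m_i$ is even for each $i < k$. For odd $a_i$ this is automatic. For an internal even block I would rule out the odd multiplicities $1, 3, 5, \ldots$ by a $C$-collapsing argument: if $a_i$ is even with $m_i$ odd and there is a block below it, one can redistribute a single box from block $i$ into the neighbouring odd block to obtain a partition that is strictly smaller in the dominance order and still of $C_n$-type. Such a move exhibits the original orbit as induced from a proper Levi subalgebra, which contradicts rigidity. This is the same type of reduction tacitly invoked in Propositions~\ref{Pb} and~\ref{Pd} for $B_n$ and $D_n$.

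Once the lemma is in hand, the pairwise statement follows by bookkeeping: the cumulative row count through block $i$ equals $m_1 + \cdots + m_i$, which is even for each $i < k$, so the pairing $(\lambda_1,\lambda_2), (\lambda_3,\lambda_4), \ldots$ never straddles a block boundary for $i<k$. Since the rows within a single block share the common parity of $a_i$, each pair consists of rows of the same parity. For the final assertion, if the total number of rows $\sum m_i$ is odd then $m_k$ must be odd; because an odd value forces even multiplicity, $a_k$ must be even, making the shortest row length $a_k$ even as claimed.

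The main obstacle I anticipate is the $C$-collapsing step in the lemma, since the stated definition of rigidity explicitly forbids only $m_i = 2$, while the pairing conclusion requires the stronger exclusion of odd multiplicities $\geq 3$ on internal even blocks. I would make this explicit by appealing to the Collingwood--McGovern classification of rigid unipotent orbits of $\mathfrak{sp}_{2n}$ to exhibit the Levi subalgebra from which the would-be induced orbit arises; once that reduction is made, the remainder of the argument is essentially combinatorial.
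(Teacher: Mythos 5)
The paper states Proposition~\ref{Pc} without proof, so your proposal can only be judged on its own terms, and unfortunately it rests on a misreading of the statement: under your reading the proposition is false, so no repair along your lines is possible. You pair up the parts $\lambda_1,\lambda_2,\lambda_3,\ldots$ themselves, but already $\lambda=2\,1^2=(2,1,1)$ --- the minimal orbit of $\mathfrak{sp}_4$, which is rigid by the paper's definition (no gaps, and the even part $2$ has multiplicity $1$, not $2$) --- has longest two parts $2$ and $1$, of opposite parity; likewise its smallest part is $1$, so ``the shortest row is even'' could never hold for parts, since every rigid partition ends in $1$'s. In this paper the ``rows'' of the Young tableau are the parts of the \emph{transpose}: the diagram of $\lambda=m^{n_m}\cdots 1^{n_1}$ is drawn with the parts as columns (this is why Section~2.3 can speak of ``the height of the $a$th row is $a$,'' and why the map $X_S$ is said to act on a partition ``with only odd rows'' even though its parts $m,m-1,\ldots,1$ alternate in parity). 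Read correctly, the proposition is immediate and does not even use rigidity: the $(2j{-}1)$th and $(2j)$th rows have lengths $\lambda^T_{2j-1}$ and $\lambda^T_{2j}$ differing by $n_{2j-1}$, the multiplicity of the odd integer $2j-1$, which is even because $\lambda$ is a symplectic partition; and if the number of rows $\lambda_1$ is odd, the shortest row has length $n_{\lambda_1}$, again the multiplicity of an odd part, hence even. The no-gaps condition enters only to guarantee that every integer $1,\ldots,\lambda_1$ occurs as a part, so that the rows are strictly decreasing and ``the longest two rows,'' ``the following two,'' etc.\ are well defined.

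Independently of the misreading, your key lemma --- every non-terminal block has even multiplicity --- is false, and the $C$-collapse you propose to fill the gap cannot exist. Rigidity in type $C$ forbids only multiplicity \emph{exactly two} for even parts; multiplicities $1,3,5,\ldots$ on internal even blocks genuinely occur. For example $4^3\,3^2\,2\,1^2=(4,4,4,3,3,2,1,1)$ satisfies the Collingwood--McGovern criterion for a rigid orbit of $\mathfrak{sp}_{22}$ (no gaps; the even parts $4$ and $2$ have multiplicities $3$ and $1$), yet its top block has odd multiplicity and the pair $(\lambda_3,\lambda_4)=(4,3)$ has mixed parity. Since this orbit really is rigid, no dominance-order or Levi-induction argument can exclude it; the lemma must be abandoned rather than proved. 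Once the statement is read as a statement about $\lambda^T$, the whole block apparatus is unnecessary and the two-line multiplicity argument above suffices.
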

Example of partitions in the $C_n$ theory are shown in Fig.(\ref{C}).
\begin{figure}[!ht]
  \begin{center}
    \includegraphics[width=2.2in]{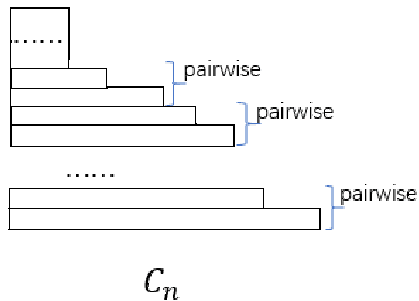}
  \end{center}
  \caption{ Partitions in the $C_n$ theory }
  \label{C}
\end{figure}

\subsection{Definition of fingerprint}
We introduce the definition of fingerprint.  First, add the two partitions  $\lambda=\lambda^{'}+\lambda^{"}$, and then calculate   the partition $\mu=Sp(\lambda)$
\begin{eqnarray}\label{mu}
\mu_i=Sp(\lambda)_i=
\left\{ \begin{aligned}
         & \lambda_i + p_{\lambda}(i) \quad  \quad   \textrm{if} \quad \lambda_i  \textrm{ is odd  and} \quad \lambda_i \neq \lambda_{i-p_{\lambda}(i)}, \\
         & \lambda_i   \quad  \quad   \quad   \quad  \quad        \textrm{ otherwise}
       \end{aligned} \right.
\end{eqnarray}
Next define the function $\tau$ from an even positive integer $m$ to $¡À1$ as follows. For a partition in the $B_n$ and $D_n$ theories,  $\tau(m)$ is $-1$ if  at least one $\mu_i$ such that $\mu_i= m$ and either of the following three conditions is satisfied.
\begin{eqnarray}\label{con}
  &&(i)\quad\quad\quad\quad\quad \,\,\mu_i\neq \lambda_i  \non \\
 && (ii) \quad\quad\quad\sum^{i}_{k=1}\mu_k \neq \sum^{i}_{k=1}\lambda_k\\
&& (iii)_{SO} \quad\quad\quad \lambda^{'}_{i} \quad\textrm{is odd}.  \non
\end{eqnarray}
Otherwise  $\tau$ is 1.
For the  partitions in the $C_n$ theory,  the definition is exactly  the same except the condition $(iii)_{SO}$ is  replaced by
$$(iii)_{Sp}\quad\quad \lambda^{'}_{i}  \quad   \textrm{is even}.$$
Finally  construct  a pair of partitions $[\alpha;\beta]$, which is related to  the conjugacy classes of the Weyl group. For each pair of parts of $\mu$ both equal to $a$, satisfying $\tau(a)=1$,   retain one part $a$ as a part of  the partition $\alpha$. For each part of $\mu$ of size $2b$, satisfying $\tau(2b)=-1$,  retain $b$ as a part of  the partition $\beta$.

Note that $\mu_i\neq\lambda_i$ only happen at the end of a row.

\subsection{Preliminary}
For a partition $\lambda=m^{n_m}{(m-1)}^{n_{m-1}} {(m-2)}^{n_{m-2}} \cdots {1}^{n_1} $ as shown in Fig.(\ref{p1}),
according to Propositions {\ref{Pb}},{\ref{Pd}}, and  {\ref{Pc}},    the number of boxes of the  gray part  is even.
\begin{figure}[!ht]
  \begin{center}
    \includegraphics[width=2.5in]{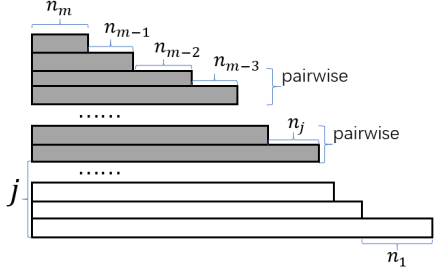}
  \end{center}
  \caption{  Number of boxes of the gray parts  is even.}
  \label{p1}
\end{figure}
For the partition in the $B_n$ and $D_n$ theories, $j$ is odd. If  the $\frac{j-1}{2}$th pairwise rows  are even, the number of boxes of the gray parts  in  Fig.(\ref{p2})(a) and (b)  are even.    If  the $\frac{j-1}{2}$th pairwise rows  are odd, the number of boxes of the gray parts  in  Fig.(\ref{p2})(a) and (b)  are odd.  Since $j$ is even for the partitions in the $C_n$ theory,  the numbers of boxes of the gray parts in Fig.(\ref{p2})(a) and (b) are always even.
\begin{figure}[!ht]
  \begin{center}
    \includegraphics[width=4.5in]{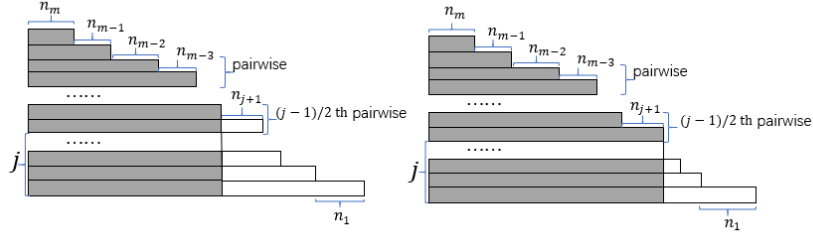}
  \end{center}
  \caption{ Parity of the  number of boxes of  the gray parts depends on the parity of the height $j$. }
  \label{p2}
\end{figure}

Next, we calculate the fingerprint for the first few parts  of a $B_n$ or $D_n$ partition as an example. If $n_a$ is the first odd number  in the sequence $n_m,n_{m-1},\cdots,n_j$,  then the $m, m-1,\cdots,{a+1}$th rows are even.
The $a$th row is odd, which is the second row of a pairwise rows.
 Let  $L(l)=\sum^{l}_{i=1}n_i$. The $L(l)$th column  is at the end  of the $l$th row as shown in Fig.(\ref{p2}).
So we have
$$p_{L(i)}=(-1)^{\sum_{k=1}^{L(i)}\lambda_k}=1,\quad\quad i\in (m,\cdots, a-1),$$
which means
$$\mu_k=\lambda_k, \quad\quad k\in (1,\cdots ,L(a)-1).$$
 The height of the $a$th row is $a$, which is odd. So we have
$$p_{L(a)}=(-1)^{\sum_{k=1}^{L(a)}\lambda_k}=-1.$$
 According to  (\ref{mu}),   $\mu_{L(a)}=\lambda_{L(a)}-1$,  satisfying  the condition (\ref{con})$(i)$ for the part $\lambda_{L(a)}(=a-1)$. So we have
$\tau(a-1)=-1$ for the  even number $a-1$.
The  $a-1$th row is the first row of a  pairwise rows, so $n_{a-1}$ is even. And we have
$$p_{L(a-1)}=(-1)^{\sum_{k=1}^{L(a)}\lambda_k}\cdot(-1)^{\sum_{k=L(a)}^{L(a-1)}\lambda_k}=(-1)^{\sum_{k=1}^{L(a)}\lambda_k}\cdot(-1)^{\sum_{k=L(a)+1}^{L(a)+n_{a-1}}(a-1)}=-1,$$
which means $\mu_{L(a-1)+1}=\lambda_{L(a-1)+1}+1$. Since $\mu_i\neq\lambda_i$ only happen at the end of a row,  we have
$$\mu_k =\lambda_k, \quad\quad  i\in (L(a)+1,\cdots, L(a)+n_{a-1}-1 ).$$
Because of  $\mu_{L(a)}=\lambda_{L(a)}+1$,  we have
\begin{equation}\label{fe}
  \sum^{L(a-1)+1}_{k=1}\mu_k = \sum^{L(a-1)+1}_{k=1}\lambda_k.
\end{equation}

For a rigid semisimple operator $(\lambda^{'}, \lambda^{"})$, following the above  procedure,    we can    construct  the partition $\mu$  for the parts $k^{n_k}$ independently.  Let
$'-'$ and $'+'$ denote  $p_\lambda(i)=(-1)^{\sum^{i}_{k=1}\lambda_k}=-1$ and  $1$, respectively. The results are as  shown in Fig.(\ref{OR}) and Fig.(\ref{ER}).  The gray box is always  appended as  the last part of row  and the black box is deleted at  the end of  row.
If $k$ is odd, there are four cases according to  $p_\lambda(L(k+1))$  and the parity of $n_k$  as shown in Fig.(\ref{OR}).
\begin{itemize}
  \item For Fig.(\ref{OR})(a), we have   $p_\lambda(L(k+1))=-1$. Then $p_\lambda(L(k+1)+1)=1$, which means $\mu_{L(k+1)+1}=\lambda_{L(k+1)+1}+1$. Since $n_k$ is even, we have $p_\lambda(L(k))=-1$ according to Fig.(\ref{p2}), which means $\mu_{L(k)}=\lambda_{L(k)}-1$.
  \item For Fig.(\ref{OR})(b), we have   $p_\lambda(L(k+1))=-1$. Then $p_\lambda(L(k+1)+1)=1$, which means $\mu_{L(k+1)+1}=\lambda_{L(k+1)+1}+1$. Since $n_k$ is odd, we have $p_\lambda(L(k))=1$ according to Fig.(\ref{p2}), which means $\mu_{L(k)}=\lambda_{L(k)}$.
  \item For Fig.(\ref{OR})(c), we have   $p_\lambda(L(k+1))=1$. Then $p_\lambda(L(k+1)+1)=-1$, which means $\mu_{L(k+1)+1}=\lambda_{L(k+1)+1}$. Since $n_k$ is odd, we have $p_\lambda(L(k))=-1$ according to Fig.(\ref{p2}), which means $\mu_{L(k)}=\lambda_{L(k)}-1$.
  \item For Fig.(\ref{OR})(a), we have   $p_\lambda(L(k+1))=1$. Then $p_\lambda(L(k+1)+1)=-1$, which means $\mu_{L(k+1)+1}=\lambda_{L(k+1)+1}$. Since $n_k$ is even, we have $p_\lambda(L(k))=1$ according to Fig.(\ref{p2}), which means $\mu_{L(k)}=\lambda_{L(k)}$.
\end{itemize}
 If $k$ is even, there are two cases according to  $p_\lambda(L(k+1))$  and the parity of $n_k$  as shown in Fig.(\ref{ER}).
\begin{itemize}
  \item For Fig.(\ref{ER})(a), we have $p_\lambda(L(k+1))=-1$, which means $\mu_{L(k+1)}=\lambda_{L(k+1)}-1$. Since $k$ is even, we have $p_\lambda(L(k))=-1$ according to Fig.(\ref{p2}), which means $\mu_{L(k)+1}=\lambda_{L(k)+1}+1$.
  \item For Fig.(\ref{ER})(b), we have   $p_\lambda(L(k+1))=1$. Then $p_\lambda(L(k+1)+1)=1$, which means $\mu_{L(k+1)+1}=\lambda_{L(k+1)+1}$. Since $k$ is even, we have $p_\lambda(L(k))=1$ according to Fig.(\ref{p2}), which means $\mu_{L(k)}=\lambda_{L(k)}$.
\end{itemize}
According to the above results, we have the following important lemma. Without confusion, the image of the map $\mu$  is alsod called the partition $\mu$.
\begin{Lem}\label{ff}
Under the map $\mu$, the change of the last box of a row  of a partition depend on  the parity of the row and  the sign of $p_\lambda(i)$.
\begin{center}
  \begin{tabular}{|c|c|l|}
  \hline
  Parity of row & Sign & Change \\
  odd & $-$ & $\mu_i=\lambda_i-1$ \\
  even & $-$ & $\mu_i=\lambda_i+1$ \\
  even & $+$ & $\mu_i=\lambda_i$ \\
  odd & $+$ & $\mu_i=\lambda_i$ \\
  \hline
\end{tabular}
\end{center}
\end{Lem}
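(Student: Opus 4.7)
The plan is to verify the four entries of the table by direct application of (\ref{mu}), using the observation (noted just before the statement) that $\mu_i \neq \lambda_i$ can occur only at the end of a row. Since the change at such a position is determined entirely by the parity of $\lambda_i$ and the sign of $p_\lambda(i) = \pm 1$, a four-case analysis suffices and the table is obtained by simply tabulating the outcome in each case.

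First I would treat $p_\lambda(i) = -1$ at the last box of an odd-length row: the neighbor $i - p_\lambda(i) = i+1$ lies in the next, shorter block, so $\lambda_{i+1} < \lambda_i$; the condition $\lambda_i \neq \lambda_{i-p_\lambda(i)}$ in (\ref{mu}) is satisfied, and the formula returns $\mu_i = \lambda_i - 1$, matching the (odd, $-$) entry. For $p_\lambda(i) = +1$, the neighbor $i-1$ is the row above, which sits inside the same block whenever $n_{\lambda_i} \geq 2$; then $\lambda_{i-1} = \lambda_i$, the condition in (\ref{mu}) fails, and $\mu_i = \lambda_i$ regardless of parity — this covers both the (odd, $+$) and (even, $+$) entries. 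For the remaining (even, $-$) entry, (\ref{mu}) at position $i$ returns $\mu_i = \lambda_i$ directly (the odd-parts condition excludes this case), but because $\lambda_i$ is even the sign propagates as $p_\lambda(i+1) = -p_\lambda(i)\cdot(-1)^{\lambda_{i+1}} = +1$ (using that $\lambda_{i+1}$ is odd by the no-gap condition for rigid partitions), which triggers the first branch of (\ref{mu}) at position $i+1$ and appends a box there. The table attributes this induced $+1$ change to the preceding even row, which is precisely the pattern isolated in Figures \ref{OR} and \ref{ER}.

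The main delicate point will be this last case: the $+1$ in the (even, $-$) entry is not a direct output of (\ref{mu}) evaluated at an even $\lambda_i$, but rather records the box appended to the adjacent odd-length row via the sign flip at the block boundary. Aligning this bookkeeping with the \emph{last box of a row} phrasing of the lemma is the only nontrivial step; once fixed, the other three entries follow immediately from the two branches of (\ref{mu}) combined with the block-boundary structure guaranteed by Propositions \ref{Pb}, \ref{Pd}, and \ref{Pc}, so checking them amounts to reading off the consequences of the four sign/parity configurations already catalogued in the preliminary discussion.
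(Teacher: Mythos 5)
Your overall strategy is the same as the paper's: the lemma is just a tabulation of the case analysis that precedes it (the bullet points attached to Figures \ref{OR} and \ref{ER}), namely a direct evaluation of (\ref{mu}) at the two ends of each block $k^{n_k}$, organized by the parity of $k$ and the sign of $p_\lambda$. You also correctly isolate the one genuinely delicate point: the $(\text{even},-)$ entry $\mu_i=\lambda_i+1$ is not produced by evaluating (\ref{mu}) at the even part itself (where the formula always returns $\lambda_i$) but records the box appended at the adjacent odd part after the sign flips, and the table attributes that box to the even row. That matches the paper's Fig.~\ref{ER}(a) computation.

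There are, however, two concrete problems. First, the sign-propagation identity should read $p_\lambda(i+1)=p_\lambda(i)\cdot(-1)^{\lambda_{i+1}}$; your extra minus sign would give $-1$ rather than the $+1$ you (correctly) use, so as written the formula contradicts your own conclusion. Second, and more substantively, your treatment of the $(\text{odd},+)$ entry is not complete: you dispose of it by noting $\lambda_{i-1}=\lambda_i$ ``whenever $n_{\lambda_i}\geq 2$,'' but when the odd part occurs exactly once (which does happen for $\lambda=\lambda'+\lambda''$, since $\lambda$ need not be rigid) we have $\lambda_{i-1}=\lambda_i+1\neq\lambda_i$, the first branch of (\ref{mu}) fires, and $\mu_i=\lambda_i+1$, apparently contradicting the table. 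The resolution is exactly the re-attribution you invoke for the $(\text{even},-)$ case, viewed from the other side: since $\lambda_i$ is odd, $p_\lambda(i-1)=-p_\lambda(i)=-1$, so the row of height $\lambda_i+1$ (even) ends with sign $-$, and the appended box belongs to that even row; the last box of the odd row of height $\lambda_i$ is unchanged. You need to state this explicitly rather than restrict to $n_{\lambda_i}\geq 2$, since otherwise the claim that the two $+$ entries are ``covered'' is not justified. Once both boundary positions of a singleton block are reconciled in this way, your argument coincides with the paper's derivation from Figures \ref{OR} and \ref{ER}.
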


  \begin{figure}[!ht]
  \begin{center}
    \includegraphics[width=4.8in]{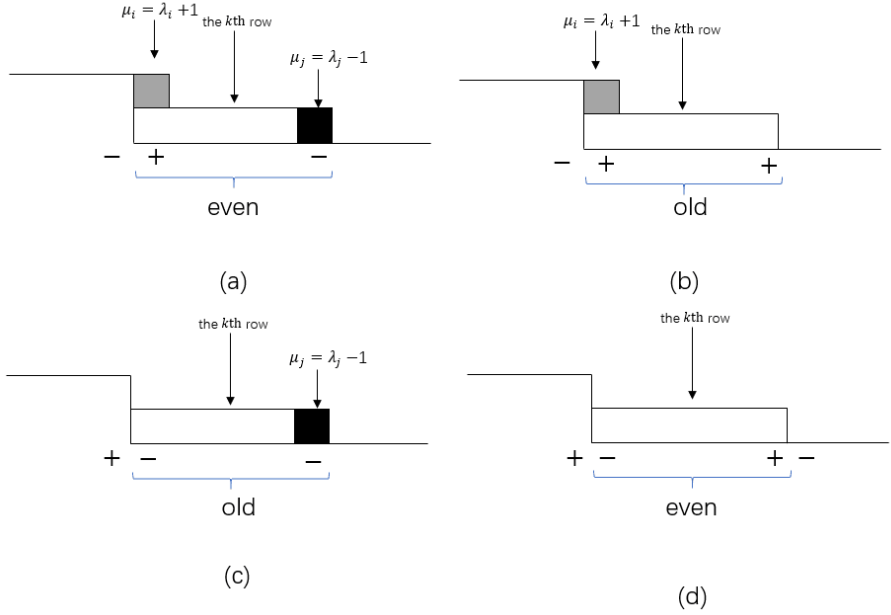}
  \end{center}
  \caption{ Partition $\mu$  for  the parts $k^{n_k}$($k$ is odd). }
  \label{OR}
\end{figure}

\begin{figure}[!ht]
  \begin{center}
    \includegraphics[width=4.8in]{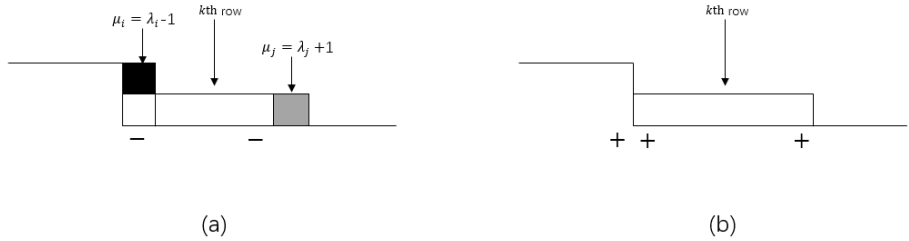}
  \end{center}
  \caption{  Partition $\mu$  for  the parts $k^{n_k}$( $k$ is even). }
  \label{ER}
\end{figure}

The partition  $\mu$ of the partition $\lambda=\lambda^{'}+\lambda^{"}$  can be constructed by using the   building blocks in  Fig.(\ref{OR}) and Fig.(\ref{ER}).  It is consist of several cycles as shown in Fig.(\ref{M}).
\begin{figure}[!ht]
  \begin{center}
    \includegraphics[width=4.8in]{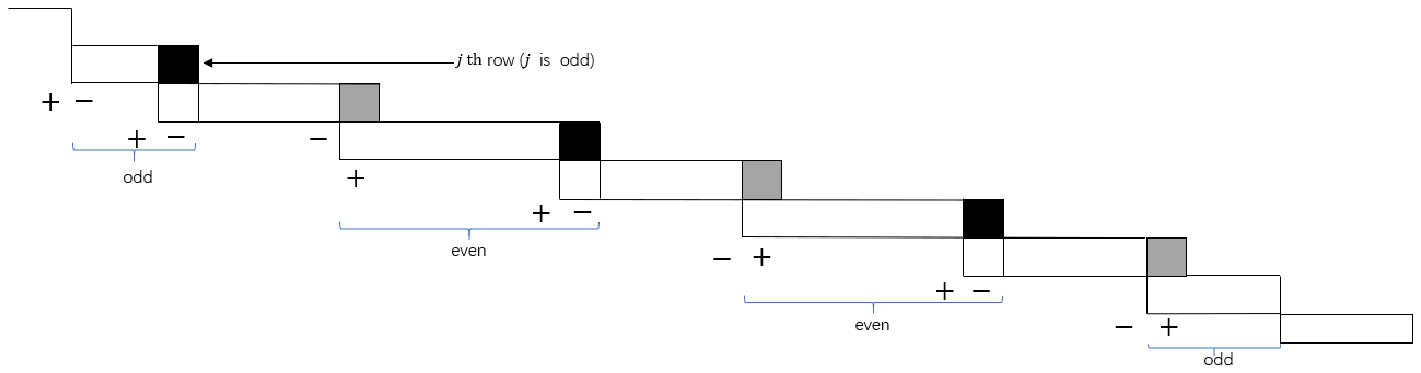}
  \end{center}
  \caption{ One cycle of partition $\mu$ between  successive odd number $n_j$ and $n_i$.}
  \label{M}
\end{figure}
 Both $i$ and $j$ are odd and both $n_i$ and $n_j$  are odd. This cycle begin with Fig.(\ref{OR})(c) corresponding to   $n_j$     and  end with    Fig.(\ref{OR})(b) corresponding to   $n_i$.              We can decompose  this cycle into building blocks in Fig.(\ref{OR}) and Fig.(\ref{ER}) as follows
\begin{eqnarray*}
  Fig.(\ref{OR})(c)\rightarrow Fig.(\ref{ER})(a)\rightarrow && Fig.(\ref{OR})(a) \rightarrow  \nonumber  \\
    Fig.(\ref{ER})(a) \rightarrow && Fig.(\ref{OR})(c)\rightarrow Fig.(\ref{ER})(a)\rightarrow Fig.(\ref{OR})(b).
\end{eqnarray*}

We introduce several facts to  characterise the definition of fingerprint. The following lemma  has been illustrated by Eq.(\ref{fe}).
\begin{lem}
The condition  $ \sum^{i}_{k=1} \mu_i \neq \sum^{i}_{k=1} \lambda_i $ in the definition of fingerprints  must   follow the  condition  $  \mu_j \neq \lambda_j $  until another $  \mu_j \neq \lambda_j $.
\end{lem}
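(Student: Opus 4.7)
The plan is to introduce the cumulative discrepancy $D_i := \sum_{k=1}^{i}\mu_k - \sum_{k=1}^{i}\lambda_k$ and recast condition (ii) at index $i$ simply as $D_i \neq 0$. Once this reformulation is in place, the lemma reduces to a routine observation about a running sum of a $\{-1,0,+1\}$-valued sequence, indexed by the (few) positions at which condition (i) can even trigger.

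First I would note that $D_0 = 0$ and $D_i - D_{i-1} = \mu_i - \lambda_i$, so $D_i$ can change only at indices $i$ where condition (i) holds, namely $\mu_i \neq \lambda_i$. By Lemma~\ref{ff}, together with the block analysis summarized in Figures~\ref{OR} and \ref{ER}, this jump vanishes except at the last box of a row, where it equals $+1$ (even row, $p_\lambda(i)=-1$), $-1$ (odd row, $p_\lambda(i)=-1$), and $0$ in the two $+$ cases. Telescoping then gives $D_i = \sum_{k\le i,\ \mu_k\neq\lambda_k}(\mu_k - \lambda_k)$, so $D_i \neq 0$ immediately forces the existence of some $j \le i$ with $\mu_j \neq \lambda_j$; this is the ``must follow'' part of the claim.

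Next I would handle the ``until another $\mu_j \neq \lambda_j$'' clause. Once the first such $j$ occurs (contributing a jump $\pm 1$), the value of $D$ stays constant on all indices strictly between $j$ and the next index $j'$ at which $\mu_{j'} \neq \lambda_{j'}$; at $j'$ a further $\pm 1$ change is added, and condition (ii) ceases to hold exactly when the accumulated jumps cancel back to $0$. This matches both the explicit computation in Eq.(\ref{fe}) and the cycle picture of Fig.(\ref{M}), where the $-1$'s and $+1$'s arising at row-ends alternate so as to return $D$ to $0$ by the end of each cycle. The argument is essentially telescopic, so there is no serious obstacle; the main subtlety is simply translating the informal lemma statement into the precise jump structure of $D_i$, and verifying, via the parity/sign dichotomy of Lemma~\ref{ff}, that each cycle in Fig.(\ref{M}) really consists of a balanced sequence of $-1$ and $+1$ contributions.
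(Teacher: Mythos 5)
Your argument is correct, and it is in fact more of a proof than the paper supplies: the paper does not prove this lemma at all, but merely points to the single computation in Eq.(\ref{fe}) and the cycle picture of Fig.(\ref{M}) as illustrations. Your reformulation via the cumulative discrepancy $D_i=\sum_{k=1}^{i}\mu_k-\sum_{k=1}^{i}\lambda_k$ is the natural way to make the statement precise: since $D_0=0$ and $D_i-D_{i-1}=\mu_i-\lambda_i$, condition $(ii)$ at index $i$ forces a prior index $j\le i$ with $\mu_j\neq\lambda_j$, and $D$ is constant between consecutive such indices, which is exactly the ``follows \dots until another'' clause. The one place where your sketch leans on something not yet fully justified is the final assertion that $D$ returns to $0$ at the very next change-point, i.e.\ that consecutive nonzero jumps alternate as $-1,+1$. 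This does hold, and the cleanest way to close it is not a case check of Lemma~\ref{ff} but the observation that by (\ref{mu}) the map $Sp$ preserves the total $\sum_k\lambda_k$ within each pairwise block of rows (each deleted box at the end of an odd row is compensated by an appended box at the end of the adjacent even row, as in Figs.(\ref{OR}) and (\ref{ER})), so $D$ vanishes at the end of every block and hence the $\pm1$ jumps must cancel in adjacent pairs. With that remark added, your argument is complete and strictly more rigorous than what the paper records.
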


 \begin{lem}
The effect of the operation $Sp(\lambda)$ is to ensure that the odd parts of the resulting partition never occur an odd number of times.
 \end{lem}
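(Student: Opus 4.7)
The plan is to prove the lemma by a direct parity count, applied to each odd value separately. Fix an odd positive integer $m$; I will show that the number of indices $j$ with $\mu_j = m$ is even. The first observation is that $Sp$ only modifies an entry $\lambda_j$ when $\lambda_j$ is odd, and each such modification replaces $\lambda_j$ by $\lambda_j \pm 1$, which is even. Consequently $\mu_j = m$ forces $\lambda_j = m$, so I need only count how many positions with $\lambda_j = m$ are \emph{not} modified by $Sp$.

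Since $\lambda$ is weakly decreasing, the set of indices with $\lambda_j = m$ is a single contiguous block $\{i, i+1, \ldots, i+k-1\}$, bounded on either side by entries not equal to $m$ (using the convention $\lambda_0 = +\infty$ and $\lambda_{\ell+1} = 0$). Within this block each step contributes $m$ (odd) to the partial sum defining $p_\lambda$, so the sign alternates and $p_\lambda(i+k-1) = (-1)^{k-1} p_\lambda(i)$. For an interior index $j$ with $i < j < i+k-1$, both neighbors $\lambda_{j-1}$ and $\lambda_{j+1}$ equal $m$; hence whatever the sign of $p_\lambda(j)$, the equality $\lambda_j = \lambda_{j - p_\lambda(j)}$ holds and $\mu_j = m$. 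Therefore modifications can occur only at the two endpoints of the block.

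The count of endpoint modifications reduces to a short case analysis. For $k > 1$: at $j = i$ only the right neighbor equals $m$, so a change occurs iff $p_\lambda(i) = +1$; at $j = i+k-1$ only the left neighbor equals $m$, so a change occurs iff $p_\lambda(i+k-1) = -1$. When $k$ is even, the alternation gives $p_\lambda(i+k-1) = -p_\lambda(i)$, so both endpoints change together (if $p_\lambda(i) = +1$) or neither does (if $p_\lambda(i) = -1$), producing $0$ or $2$ modifications. When $k$ is odd and $k \geq 3$ the alternation yields $p_\lambda(i+k-1) = p_\lambda(i)$, and in each sign scenario exactly one endpoint changes, producing a single modification. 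The degenerate case $k = 1$ requires a separate check: neither neighbor equals $m$, so the comparison with $\lambda_{i - p_\lambda(i)}$ fails for either sign of $p_\lambda(i)$, forcing exactly one modification. In every case $k$ minus the number of modifications is even, which is what needed to be shown.

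The only place that demands care is the alternation of $p_\lambda$ across a run of equal odd values, together with the boundary convention at $i = 1$. At that boundary $\lambda_1$ odd forces $p_\lambda(1) = -1$ automatically, which is precisely the sign that produces no modification at a left endpoint; thus the missing genuine neighbor $\lambda_0$ causes no trouble. Beyond this small bookkeeping the proof is routine, since the lemma reduces to the parity accounting described above.
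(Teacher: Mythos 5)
Your argument is correct. Note that the paper states this lemma without any proof, so there is no in-text argument to compare against; the closest material is the four-case building-block analysis for the parts $k^{n_k}$ with $k$ odd in Fig.(\ref{OR}) and Lemma \ref{ff}, which is exactly your endpoint case analysis in different clothing: the sign of $p_\lambda$ entering a maximal run of an odd value $m$, together with the parity of the run length, determines whether $0$, $1$, or $2$ endpoints of the run are modified, and in every case the number of surviving parts equal to $m$ is even. Your reduction is complete and makes explicit the deduction the paper leaves implicit: a modified entry always becomes even (so $\mu_j=m$ forces $\lambda_j=m$ unmodified), interior entries of a run are never modified because both neighbours equal $m$, the alternation $p_\lambda(i+t)=(-1)^t p_\lambda(i)$ across the run ties the two endpoint decisions together, and the boundary conventions ($p_\lambda(1)=-1$ whenever $\lambda_1$ is odd, and $\lambda_{\ell+1}=0$ at the tail, which matches the paper's treatment of the last part of a $B_n$ partition) are handled correctly. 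The only real difference is organizational: you count by maximal runs of a fixed odd value rather than by rows of the Young tableau, which makes the parity bookkeeping self-contained and independent of the rigidity hypotheses, whereas the paper's version is interleaved with its row-by-row figures.
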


For the semisimple operator  $(\lambda^{'},\lambda^{''})$ in the $B_n$ theory, $\lambda^{'}$ and   $\lambda^{''}$  are partitions in the $B_n$ and $D_n$ theories, respectively. Since  the first row of the partitions in the $B_n$ and $D_n$ theories is  not in  pairwise rows, $\lambda^{'}$ and   $\lambda^{''}$ can be seen as in  the same theory.
\begin{lem}
When $\lambda=\lambda^{'}+\lambda^{''}$ is even, the condition
$$(iii)_{SO}, \quad \lambda^{'}_{i} \quad \textrm{is odd} $$
is equivalent to the condition
$$(iii)_{SO}, \quad \lambda^{''}_{i} \quad \textrm{is odd }$$.
\end{lem}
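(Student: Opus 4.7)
The plan is to reduce the statement to an elementary parity observation applied pointwise. By the definition of the sum of two partitions used throughout this section, the $i$-th part of $\lambda$ is simply $\lambda_i = \lambda'_i + \lambda''_i$. So I would begin by unpacking the hypothesis: the phrase ``$\lambda = \lambda'+\lambda''$ is even'' is to be read at the index $i$ that appears in condition $(iii)_{SO}$ (i.e.\ the index where $\mu_i$ equals the even integer $m$ whose value of $\tau$ is being tested), meaning $\lambda_i$ is even.

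Next, I would invoke the parity identity $\lambda'_i + \lambda''_i \equiv 0 \pmod{2}$, which is immediate from $\lambda_i$ being even. This forces $\lambda'_i \equiv \lambda''_i \pmod{2}$, i.e.\ the two parts have the same parity. In particular $\lambda'_i$ is odd if and only if $\lambda''_i$ is odd, which is exactly the equivalence claimed between $(iii)_{SO}$ for $\lambda'$ and $(iii)_{SO}$ for $\lambda''$.

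The only item that requires mild justification rather than outright computation is the remark preceding the lemma, namely that for a $B_n$ semisimple operator $(\lambda',\lambda'')$ the partitions $\lambda'$ (of $B$-type) and $\lambda''$ (of $D$-type) may be treated on the same footing because in both Propositions \ref{Pb} and \ref{Pd} the first row sits outside the pairwise pattern. This is what allows the roles of $\lambda'$ and $\lambda''$ to be swapped in the definition of $\tau$, so the parity equivalence above genuinely translates into an equivalence of the two versions of $(iii)_{SO}$ used to compute the fingerprint. There is no real obstacle in the argument; the ``hard'' point is purely notational, namely making sure the statement is interpreted pointwise in $i$ so that the one-line parity argument applies.
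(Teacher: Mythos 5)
Your parity argument is correct, and it is worth noting that the paper states this lemma with no proof at all, so there is no ``paper approach'' to compare against: the observation that $\lambda'_i+\lambda''_i\equiv 0\pmod 2$ forces $\lambda'_i\equiv\lambda''_i\pmod 2$ is the whole content, and you supply it. Your reading of the hypothesis is also the right one --- ``even'' must be taken pointwise at the index $i$ appearing in $(iii)_{SO}$, and this is precisely the case in which $(iii)_{SO}$ can be decisive, since if $\mu_i=m$ is even while $\lambda_i$ is odd then condition $(i)$ already forces $\tau(m)=-1$. The one thing you should make explicit rather than leave implicit is the convention for $\lambda'+\lambda''$: your identity $\lambda_i=\lambda'_i+\lambda''_i$ presupposes the part-wise sum (with the shorter partition padded by zeros), which is the only reading under which the notation $\lambda'_i$ in $(iii)_{SO}$ is indexed compatibly with $\lambda_i$ and $\mu_i$; Section~4 of the paper, by contrast, describes $\lambda$ as obtained by interleaving the rows of $\lambda'$ and $\lambda''$ (a union of parts), under which your identity would be false. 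Stating that you work with the part-wise sum closes the only loophole in an otherwise complete argument.
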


\begin{lem}
If the fingerprint of  $\lambda=\lambda^{'}+\lambda^{''}$  is $[\alpha,\beta]$, the fingerprints of the partition with  $\lambda_i+2$ correspond to $\alpha_i+2$ and $\beta_i+1$.
\end{lem}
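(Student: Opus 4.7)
The plan is to show that the map $\lambda\mapsto[\alpha,\beta]$ defining the fingerprint is equivariant under the uniform shift $\lambda_i\mapsto\lambda_i+2$, so that each $\alpha$-part shifts by $2$ and each $\beta$-part shifts by $1$. The heart of the argument is that this shift preserves every piece of parity data on which the intermediate partition $\mu$ and the sign function $\tau$ depend.

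First I would check invariance of the auxiliary sign $p_\lambda(i)=(-1)^{\sum_{k=1}^{i}\lambda_k}$: the cumulative sum changes by the even integer $2i$, so $p_\lambda(i)$ is unchanged. Each $\lambda_i$ shifts by an even number, so the parity of each row is preserved, and the pairwise-row structure of Propositions \ref{Pb}, \ref{Pd}, \ref{Pc} still holds. Applying Lemma \ref{ff} row by row, the displacement $\mu_i-\lambda_i$ is determined only by the row parity and by $p_\lambda(i)$, both of which are invariant; therefore $\mu(\lambda+2)=\mu(\lambda)+2$ pointwise.

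Next I would verify that each of the three conditions defining $\tau$ is stable under the shift. Condition $(i)$, $\mu_i\neq\lambda_i$, survives because both sides change by the same amount. Condition $(ii)$, $\sum_{k=1}^{i}\mu_k\neq\sum_{k=1}^{i}\lambda_k$, survives for the same reason, since both partial sums increase by $2i$. Condition $(iii)_{SO}$ (resp.\ $(iii)_{Sp}$) concerns the parity of $\lambda^{'}_i$; reading ``$\lambda_i+2$'' as a splitting of the increment into even amounts between $\lambda^{'}_i$ and $\lambda^{''}_i$, the parity of $\lambda^{'}_i$ is preserved. Hence $\tau(m+2)$ computed from the shifted data equals $\tau(m)$ computed from the original data.

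Finally I would read off the new fingerprint. An $\alpha$-part is recorded when a matched pair of parts of $\mu$ both equal $a$ with $\tau(a)=1$, and the value retained is $a$; the shift turns such a pair into two parts equal to $a+2$ with $\tau(a+2)=1$, giving $\alpha_i\mapsto\alpha_i+2$. A $\beta$-part is recorded when a part of $\mu$ has size $2b$ with $\tau(2b)=-1$, and the value retained is $b$; the shift turns $2b$ into $2(b+1)$ with $\tau$ unchanged, giving $\beta_i\mapsto\beta_i+1$. The main delicacy is the handling of condition $(iii)$, because it refers to the summand $\lambda^{'}$ rather than to $\lambda$ itself, so one must specify the convention for distributing the shift across $(\lambda^{'},\lambda^{''})$ and confirm that the shifted data still represents a rigid semisimple operator of the required type; this is the step I expect to require the most care, while the remainder of the argument is bookkeeping on top of Lemma \ref{ff}.
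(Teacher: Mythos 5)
Your proposal is correct and follows essentially the same route as the paper, whose entire proof is the one-line observation that $\lambda_i \rightarrow \lambda_i+2$ induces $\mu_i \rightarrow \mu_i+2$. You simply supply the parity bookkeeping for $p_\lambda$ and the $\tau$-invariance check (including the convention for distributing the shift between $\lambda^{'}$ and $\lambda^{''}$ in condition $(iii)$) that the paper leaves implicit.
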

\begin{proof}  $\lambda_i \rightarrow \lambda_i+2$ correspond to  $\mu_i \rightarrow \mu_i+2$,  which lead to the conclusion.
\end{proof}

We have the following fact from Fig.(\ref{M}).
\begin{Pro}
The condition (\ref{con})$(i)$  imply $(ii)$ for partition $\lambda$ with $\lambda_i-\lambda_{i+1}\leq 1$.
\end{Pro}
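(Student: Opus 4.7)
The plan is to reinterpret the claim at the level of the invariant $\tau(m)$: I will show that whenever some $\mu_i = m$ satisfies condition (i), some (possibly different) $\mu_{i'} = m$ satisfies condition (ii), so that (i) is redundant in the definition of $\tau(m) = -1$.

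First I would use the defining formula (\ref{mu}) together with the no-gap hypothesis to classify the positions where $\mu_i \neq \lambda_i$. The clause $\lambda_i \neq \lambda_{i - p_\lambda(i)}$ combined with $\lambda_i - \lambda_{i+1} \leq 1$ forces $i$ to be either the last row of its group (when $p_\lambda(i) = -1$, giving $\mu_i = \lambda_i - 1$) or the first row of its group (when $p_\lambda(i) = +1$, giving $\mu_i = \lambda_i + 1$). In both cases $\lambda_i$ must be odd, so $\mu_i$ is even; thus (i) can produce $\tau(m) = -1$ only for even $m$, and the odd case is vacuous.

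The core step is a pairing argument for even $m$ in the interior, i.e.\ with $n_{m+1}, n_{m-1} \geq 1$. Set $b = L(m+1)$ and $b' = L(m)+1$. Using $\sum_{k=b+1}^{b'}\lambda_k = n_m\, m + (m-1)$ together with the parity of $m$, one finds
\begin{equation*}
p_\lambda(b') \;=\; p_\lambda(b)\,(-1)^{n_m m + (m-1)} \;=\; -\,p_\lambda(b),
\end{equation*}
so the $-1$ modification at $b$ (yielding $\mu_b = m$) occurs if and only if the $+1$ modification at $b'$ (also yielding $\mu_{b'} = m$) occurs. Between $b$ and $b'$ lies the whole of group $m$, whose rows have $\lambda_k = m$ (even) and therefore receive no modification; thus $S_i := \sum_{k=1}^{i}(\mu_k - \lambda_k)$ is constant on $[b, b'-1]$ and satisfies $S_{b'} = S_b + 1$. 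Two integers differing by one cannot both vanish, so at least one of $b, b'$ --- both of which have $\mu = m$ --- satisfies condition (ii). This completes the interior case.

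It remains to dispose of the boundary, where $n_{m+1}=0$ (i.e.\ $m$ is the largest part; the alternative $n_{m-1}=0$ is excluded by the no-gap hypothesis as soon as some row is shorter than $m$). Here $b'$ is the only candidate modification site, and a direct parity count of $\sum_{k\leq b'}\lambda_k = n_m\, m + (m-1)$ using the structural Propositions \ref{Pb}, \ref{Pd}, \ref{Pc} shows that $p_\lambda(b') = -1$ rather than $+1$, so the modification does not happen and (i) does not trigger for $m$ at all. The principal obstacle is this boundary step, which must be checked theory by theory for $B_n$, $C_n$, $D_n$; by contrast, the interior pairing argument itself reduces to the one-line identity $S_{b'} = S_b + 1$.
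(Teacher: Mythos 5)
Your reading of the statement at the level of $\tau(m)$ --- that an index with $\mu_i=m$ satisfying condition (\ref{con})$(i)$ forces \emph{some} index with $\mu_{i'}=m$ to satisfy $(ii)$ --- is the right one, and in fact the only tenable one: the paper's own Eq.~(\ref{fe}) shows that at an addition site the partial sums of $\mu$ and $\lambda$ coincide, so the pointwise implication is false. Your route is also genuinely different from the paper's. The paper gives no computation here; it simply points at the cycle decomposition of Fig.(\ref{M}), assembled from the building blocks of Fig.(\ref{OR}) and Fig.(\ref{ER}), in which deletions and additions alternate beginning with a deletion, so that $S_i=\sum_{k\le i}(\mu_k-\lambda_k)$ equals $-1$ exactly on the stretch from each deletion to its partner addition. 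Your argument localizes this: for each even $m$ you identify the only two possible witnesses $b=L(m+1)$ (deletion, $\mu_b=m$) and $b'=L(m)+1$ (addition, $\mu_{b'}=m$), show from the parity of $n_m\,m+(m-1)$ that they fire together, and conclude from $S_{b'}=S_b+1$ that at least one of them violates the partial-sum equality. This is cleaner and more robust than the paper's global bookkeeping, since it never needs the actual values of $S$.

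There is one loose end. You dismiss the boundary case $n_{m-1}=0$ too quickly: it is not excluded by the no-gap condition, it merely forces every part to be at least $m$, and in that situation the deletion at $b=L(m+1)$ can still occur with no partner site. For instance $\lambda=3^32^2$ (rigid in $B_6$) with $m=2$ gives $\mu_3=2\ne\lambda_3$, so $(i)$ holds at $i=3$, while $b'=L(2)+1$ does not exist; your pairing identity then says nothing, and $S_b\ne0$ must be argued directly. The repair follows from your own pairing applied to all even $m'$: every addition site $L(m')+1$ fires only together with its deletion partner $L(m'+1)$, which has strictly smaller index, so every $+1$ step of $S$ is preceded by its matching $-1$ step and hence $S_i\le 0$ for all $i$. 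Consequently $S_b=S_{b-1}-1\le-1\ne0$ and $(ii)$ holds at $b$. With that one line added, your proof is complete and, unlike the paper's, self-contained.
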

We would give an example,  which the condition $(ii)$ works as shown in Fig.(\ref{con2}). Both the height of the $(j-1)$th row, the $(j-3)$th row, and the $(j-5)$th are even. And  the difference of the height of the $(j-1)$th row and the $(j-3)$th row violate the rigid condition $\lambda_i-\lambda_{i+1}\leq 1$ as well as height of the $(j-3)$th row and the $(j-5)$th row. The part $(j-3)$ satisfy the condition (\ref{con})$(ii)$ but do not satisfy the condition $(i)$.
\begin{figure}[!ht]
  \begin{center}
    \includegraphics[width=2.5in]{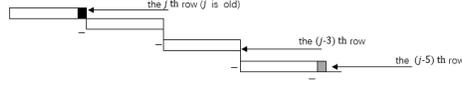}
  \end{center}
  \caption{ The difference of the height of the $(j-1)$th row and the $(j-3)$th row is two, violating the rigid condition $\lambda_i-\lambda_{i+1}\leq 1$.}
  \label{con2}
\end{figure}

\section{Fingerprint of rigid unipotent operators}\label{fu}
For rigid unipotent operators corresponding to  unipotent conjugacy classes, the calculations of  fingerprint can be simplified  using Propositions (\ref{Pb}), (\ref{Pd}), and (\ref{Pc}).
 The partition $\mu$  can be described by  maps $X_S$ and $Y_S$.

\subsection{$B_n$ and $D_n$ theories}\label{bdfinger}
First, we introduce the  map $X_S$  and the map $Y_S$. The map $X_S$ map a partition with only odd rows in the $B_n$ theory to  a partition with only even rows in the $C_n$ theory as shown in  Fig.(\ref{xs}).
\begin{eqnarray} \label{XS}
X_S:&& m^{2n_m+1}\, (m-1)^{2n_{m-1}}\, (m-2)^{2n_{m-2}  } \cdots 2^{2n_2} \, 1^{2n_1}  \non \\   & \mapsto&
m^{2n_m}\, (m-1)^{2n_{m-1} +2 }\, (m-2)^{2n_{m-2} - 2 } \cdots 2^{n_2+2} \, 1^{2n_1-2}\,.
\end{eqnarray}
\begin{figure}[!ht]
  \begin{center}
    \includegraphics[width=2.5in]{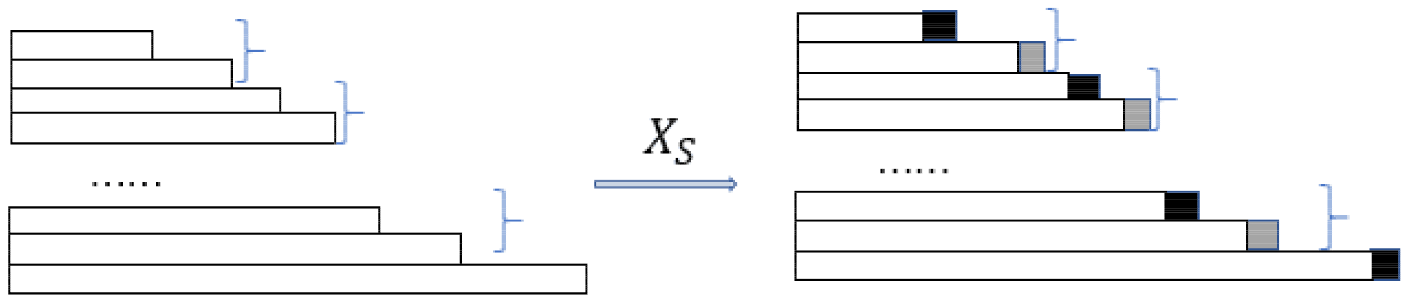}
  \end{center}
  \caption{ Map $X_S$. }
  \label{xs}
\end{figure}
where $m$ has to be odd in order for the first object to be a partition in the $B_n$ theory.  The black box of the second row of a pairwise rows is deleted and a gray  box is appended  at the end of the first row of a pairwise rows.  The $2k$th and $(2k+1)$th rows are in a pairwise rows of the partition in the $B_n$ theory. The $(2k+1)$th and $(2k+2)$th rows are in a pairwise rows of a partition in the $C_n$ theory under the map $X_S$.
The map $X_S$  is a bijection so that $X_S^{-1}$ is well defined.
It (\ref{XS}) is essentially the '$p_C$ collapse' described in \cite{Collingwood:1993} and the inver map  $X_S^{-1}$ is essentially the '$p^B$ expansion'.

\begin{figure}[!ht]
  \begin{center}
    \includegraphics[width=2.5in]{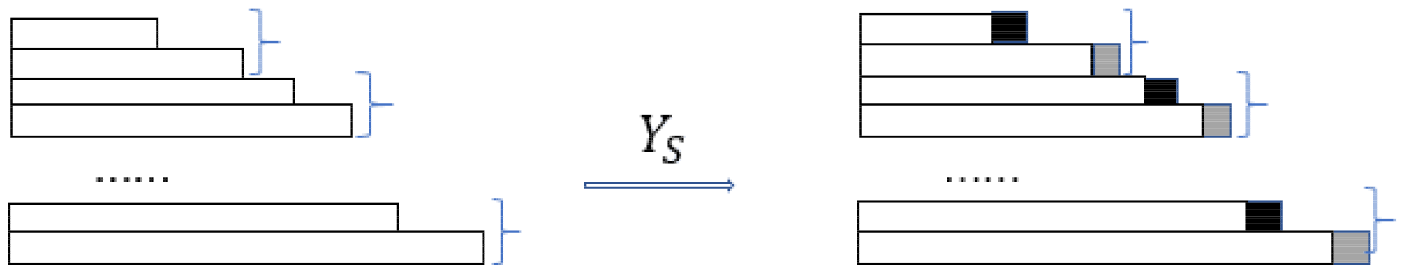}
  \end{center}
  \caption{ Map $Y_S$.  }
  \label{ys}
\end{figure}
Second, we propose the following map $Y_S$ which take  a  rigid partition with only odd rows  in the $C_n$ theory to  a  rigid partition with only even rows  in the $D_n$ theory as shown in Fig.(\ref{ys}).
\begin{eqnarray} \label{YS}
Y_S:&& m^{2n_m+1}\, (m-1)^{2n_{m-1}}\, (m-2)^{2n_{m-2}  } \cdots 2^{2n_2} \, 1^{2n_1}  \non \\   & \mapsto&
m^{2n_m}\, (m-1)^{2n_{m-1} +2 }\, (m-2)^{2n_{m-2} - 2 } \cdots 2^{n_2-2} \, 1^{2n_1+2}\,
\end{eqnarray}
where $m$ has to be even in order for the first element to be a $C_k$ partition.
The $2k+1$th and $(2k+2)$th rows are in a pairwise rows of a partition in the  $C_n$ theory. The $(2k+2)$th and $(2k+3)$th rows are in a pairwise rows of a partition in the $D_n$ theory under the map $Y_S$.
This is a bijection.   The map (\ref{XS}) is essentially the '$p_D$ collapse' described in \cite{Collingwood:1993} and the inver map  $Y_S^{-1}$ is essentially the '$p^C$ expansion'.

\begin{flushleft}
\textbf{Partition} $\mu$
\end{flushleft}
Since $\mu_i\neq\lambda_i$ only happen at the end of a row and the number boxes of each pairwise rows is even, we consider the image of each pairwise rows under the map $\mu$ independently.
First, consider an odd pairwise rows  of the partition in the $B_n$ or $D_n$ theories  as shown in Fig.(\ref{OO}). The $i$th row is the second row of the  pairwise rows, so $i$ is odd.  According to Fig.(\ref{p2})(a), we have
$$p_{\lambda}(L(i))=(-1)^{\sum^{L(i)}_{k=1}\lambda_k}=-1,$$
which means
$$\mu_{L(i)}=\lambda_{L(i)}+p_{\lambda}(L(i))=\lambda_{L(i)}-1.$$
Similarly, according to Fig.(\ref{p2})(b), we have
$$p_{\lambda}(L(i-1))=(-1)^{\sum^{L(i-1)}_{k=1}\lambda_k}=-1,$$
which means
$$p_{\lambda}(L(i-1)+1)=(-1)^{\sum^{L(i-1)}_{k=1}\lambda_k + \lambda_{L(i-1)+1} }=1.$$
Then we have
$$\mu_{L(i-1)+1}=\lambda_{L(i-1)+1}+p_{\lambda}(L(i-1)+1)=\lambda_{L(i-1)+1}+1.$$
We can also get this result using Lemma \ref{ff} directly. So  we reach  the right hand side of  Fig.(\ref{OO}). In fact,  this result  is Fig.(\ref{ER})(a).
\begin{figure}[!ht]
  \begin{center}
    \includegraphics[width=4.9in]{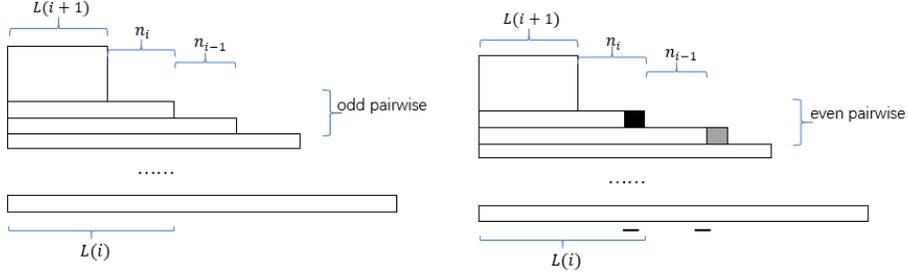}
  \end{center}
  \caption{ Partition $\mu$  for  the parts ${k^{n_k}(k-1)}^{n_{k-1}}$ corresponding an odd pairwise rows  of a partition in the $B_n$ or $C_n$ theory.}
  \label{OO}
\end{figure}

Next consider an even pairwise rows  of  the partition as shown in Fig.(\ref{EE}).  According to Fig.(\ref{p2})(a), we have
$$p_{\lambda}(L(i))=(-1)^{\sum^{L(i)}_{k=1}\lambda_k}=1,$$
which means
$$\mu_{L(i)}=\lambda_{L(i)}.$$
Similarly, according to Fig.(\ref{p2})(b), we have
$$p_{\lambda}(L(i-1))=(-1)^{\sum^{L(i-1)}_{k=1}\lambda_k}=1,$$
and then
$$p_{\lambda}(L(i-1)+1)=(-1)^{\sum^{L(i-1)}_{k=1}\lambda_k + \lambda_{L(i-1)+1} }=-1,$$
which means
$$\mu_{L(i-1)+1}=\lambda_{L(i-1)+1}.$$
We can also get this result using Lemma \ref{ff} directly. So  we reach  the right hand side of  Fig.(\ref{EE}).  In fact,  this  result is Fig.(\ref{ER})(b).
\begin{figure}[!ht]
  \begin{center}
    \includegraphics[width=4.9in]{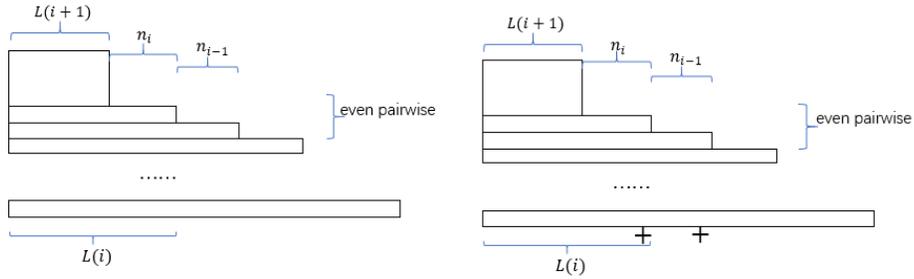}
  \end{center}
  \caption{ Partition $\mu$  for  the parts ${k^{n_k}(k-1)}^{n_{k-1}}$ corresponding an even pairwise rows  of a partition in the $D_n$ theory.. }
  \label{EE}
\end{figure}

The first row of a $B_n$ or $D_n$ partition is not in a pairwise rows. Since the number of boxes of a $B_n$ partition is odd, $p_{\lambda}=-1$ for the last part, which means we should delete the last box of the fist row according to Lemma \ref{ff}.  Since the number of boxes of a $D_n$ partition is even, $p_{\lambda}=1$ for the last part, which means nothing change for the first row  according to Lemma \ref{ff}.
Summary,  for a $B_n$ partition $\rho$, the $\mu$ partition is
$$\mu=X_S\rho_{odd}+\rho_{even}.$$
For a $D_n$ partition $\rho$, the $\mu$ partition is
$$\mu=Y_S\rho_{odd}+\rho_{even}.$$

\begin{flushleft}
\textbf{Fingerprint}
\end{flushleft}
\begin{figure}[!ht]
  \begin{center}
    \includegraphics[width=4.9in]{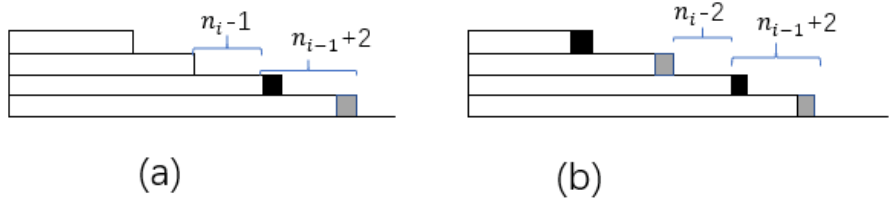}
  \end{center}
  \caption{ $(a)$ Image of an odd pairwise rows  following odd pairwise rows    under the map $\mu$.  $(b)$ Image of an odd pairwise rows   following even pairwise rows   under the map $\mu$. }
  \label{bdfo}
\end{figure}
Now, we construct fingerprint $[\alpha, \beta]$. The condition '$(iii)_{SO} \quad \lambda^{'}_{i} \quad\textrm{is odd}$'   can not be  satisfied for  even parts.  The pairwise rows  above an odd pairwise rows can be  an even pairwise rows or an odd pairwise  rows   as shown in Fig.(\ref{bdfo}). We compute the fingerprint for the parts $i^{n_i}(i-1)^{n_{i-1}}$. For Fig.(\ref{bdfo})(a), both $i$ and $n_i$ are odd  since the pairwise rows following   are even.  The fingerprint of the parts $i^{{n_i-1}}$ of $\mu$  is
\begin{equation}\label{bdfe1}
  i^{\frac{n_i-1}{2}}.
\end{equation}
Since the part $i-1$ satisfy the condition \ref{con} $i$, the fingerprint of the parts $(i-1)^{{n_{i-1}}+2}$  is
\begin{equation}\label{bdfe01}
 ({\frac{i-1}{2}})^{{n_{i-1}+2}}.
\end{equation}
For Fig.(\ref{bdfo})(a), the fingerprint is denoted as
$$F_{oe}=[i^{\frac{n_i-1}{2}},({\frac{i-1}{2}})^{{n_{i-1}+2}}].$$

 For Fig.(\ref{bdfo})(b),  $n_i$ is even since the pairwise rows following   are odd.   The fingerprint of the odd parts $i^{{n_i-2}}$  is
\begin{equation}\label{bdfe2}
  i^{\frac{n_i-2}{2}}.
\end{equation}
Since the part $i-1$ satisfy the condition \ref{con} $i$, the fingerprint of the parts $(i-1)^{{n_{i-1}}+2}$  is
\begin{equation}\label{bdfe02}
 ({\frac{i-1}{2}})^{{n_{i-1}+2}}.
\end{equation}
For Fig.(\ref{bdfo})(b), the fingerprint is denoted as
$$F_{oo}=[i^{\frac{n_i-2}{2}},({\frac{i-1}{2}})^{{n_{i-1}+2}}].$$
For both cases, (\ref{bdfe01})  is equal to (\ref{bdfe02}).


\begin{figure}[!ht]
  \begin{center}
    \includegraphics[width=4.9in]{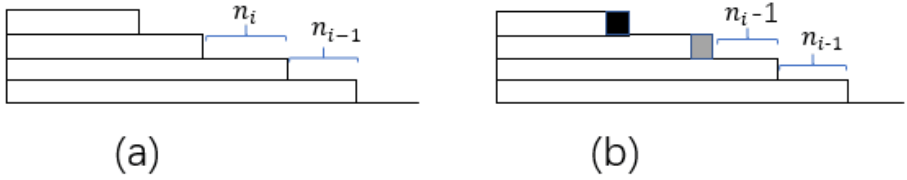}
  \end{center}
  \caption{ $(a)$, Image of an even pairwise rows  following even pairwise rows    under the map $\mu$.  $(b)$, Image of an even pairwise rows   following even pairwise rows   under the map $\mu$. }
  \label{bdfe}
\end{figure}
The pairwise rows  above an even pairwise rows can be  an even pairwise rows or an odd pairwise  rows  as shown in Fig.(\ref{bdfe}). For Fig.(\ref{bdfe})(a), $n_i$ is even since the pairwise rows following   are even.   The fingerprint of the odd parts $i^{{n_{i}}}$  is
\begin{equation}\label{bdfo1}
  i^{\frac{n_i}{2}}.
\end{equation}
And the fingerprint of the even parts $(i-1)^{{n_{i-1}}}$  is
\begin{equation}\label{bdfo01}
 ({{i-1}})^{\frac{n_{i-1}}{2}}.
\end{equation}
 For Fig.(\ref{bdfe})(b), $n_i$ is odd since the pairwise rows following   are odd.  The fingerprint of the odd parts $i^{{n_i}}$  is
\begin{equation}\label{bdfo2}
  i^{\frac{n_i-1}{2}}.
\end{equation}
And the fingerprint of the even parts $(i-1)^{{n_{i-1}}}$  is
\begin{equation}\label{bdfo02}
 ({{i-1}})^{\frac{n_{i-1}}{2}}.
\end{equation}
For both cases the formula  (\ref{bdfo01})is equal to the formula  (\ref{bdfo02}). Both the formula  (\ref{bdfo1})and the formula  (\ref{bdfo02}) can be denoted as  $i^{[\frac{n_i}{2}]}$.   Combined  the formulas (\ref{bdfo1}),(\ref{bdfo01}),(\ref{bdfo2}),and (\ref{bdfo02}), the fingerprint of the parts ${i^{n_i}(i-1)}^{n_{i-1}}$ is
\begin{equation}\label{bfi}
 F_{o}=[i^{[\frac{n_i}{2}]}({{i-1}})^{\frac{n_{i-1}}{2}}, \emptyset].
\end{equation}

Finally, we discuss the fingerprint of the parts $1^{n_1}$ in the $B_n$ and $D_n$ theories.
Since the number of the boxes of a $B_n$ partition is odd, the sign of the last part of the first row is $'-'$, which means a box is deleted under the map $\mu$. We can refer to the change of the  second row of a odd pairwise rows as shown in Fig.(\ref{bdfo}). If the row following the first row is even, according to  Fig.(\ref{bdfo})(a), the fingerprint is
$$[ 1^{\frac{n_1-1}{2}},\quad].$$
If the row following the first row is odd,  according to  Fig.(\ref{bdfo})(b),  the fingerprint is
$$[ 1^{\frac{n_1-2}{2}},\quad].$$

Since the number of the boxes of a $D_n$ partition is even, the sign of the last part of the first row is $'+'$, which means nothing changes under the map $\mu$. We can refer to the change of the  second row of a odd pairwise rows as shown in Fig.(\ref{bdfe}).  According to  Fig.(\ref{bdfe})(a) and Fig.(\ref{bdfe})(b), the fingerprint is
\begin{equation}\label{fbfbfbfb}
 [ 1^{\frac{[n_1]}{2}},\quad].
\end{equation}

\subsection{$C_n$ theory}\label{cfinger}
\begin{figure}[!ht]
  \begin{center}
    \includegraphics[width=3.8in]{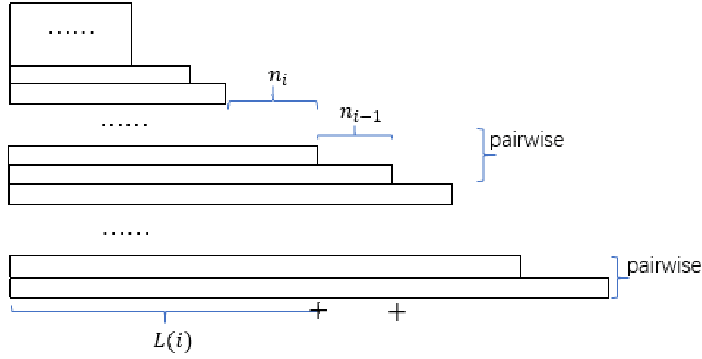}
  \end{center}
  \caption{ A partition in the $C_n$ theory.}
  \label{cc0}
\end{figure}
Different from the partitions in the $B_n$ and $D_n$ theories, the first two rows of a $C_n$ partition are of a pairwise rows as shown in Fig.(\ref{cc0}).     If $i$ and $(i-1)$th rows are of a pairwise rows, according to Fig.(\ref{f2}), $p_{\lambda}(L(i))=+1$ and  $p_{\lambda}(L(i-1))=+1$ for  $\lambda_{L(i)}$ and $\lambda_{L(i-1)}$ at  the end  of row, respectively.  Using  Lemma \ref{ff},  we have
$$\mu=\lambda,$$
as shown in Fig.(\ref{CC}).
\begin{figure}[!ht]
  \begin{center}
    \includegraphics[width=4.9in]{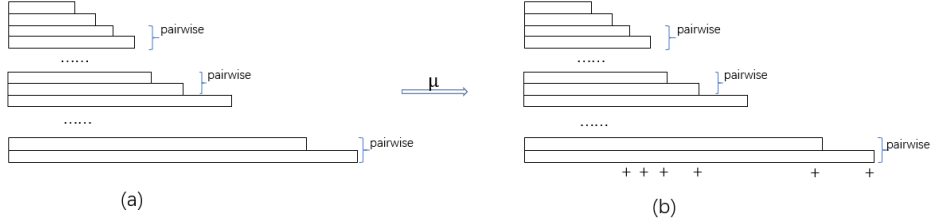}
  \end{center}
  \caption{  Images of the partition $\lambda$ in the $C_n$ theory under the map $\mu$.}
  \label{CC}
\end{figure}

The condition '$(iii)_{SO} \quad\quad\lambda^{'}_{i} \quad\textrm{is odd}$'  can not be    satisfied for  even parts.  Since $(i-1)$th row is the first row of a pairwise, $(i-1)$ is odd and $n_{i-1}$ is even. The parts ${(i-1)}^{{n_{i-1}}}$ contribute to the first factor of the fingerprint as follows
\begin{equation}\label{fi1}
  [{(i-1)}^{\frac{n_{i-1}}{2}}, \quad]
\end{equation}
 Since $i$th row is the second row of a pairwise, $i$ is even.
 The parts ${(i)}^{{n_{i}}}$ contribute to the second factor of the fingerprint as follows
 \begin{equation}\label{fi2}
   [ {{i}}^{\frac{n_{i}}{2}}, \quad].
 \end{equation}
Combining the  formulas   (\ref{fi1}) and (\ref{fi2}), the fingerprint of $\lambda$ can be written  down directly as follows
\begin{equation}\label{cfi}
  [\prod_{i=1} {{i}}^{\frac{n_{i}}{2}}, \emptyset].
\end{equation}

\section{Fingerprint of rigid semisimple   operators}
In this section, we propose an algorithm to calculate the fingerprints of rigid semisimple   operators $(\lambda^{'}, \lambda^{''})$ in the $B_n$, $C_n$, and $D_n$ theories. We decompose $\lambda=\lambda^{'}+\lambda^{''}$ into several blocks whose fingerprints can be constructed   by using the results of rigid unipotent operators. The fingerprint of the $C_n$ rigid semisimple operators  is a  simplified  version of that in the $B_n$ and $D_n$ theories.

\subsection{$B_n$ theory}
\begin{figure}[!ht]
  \begin{center}
    \includegraphics[width=4.9in]{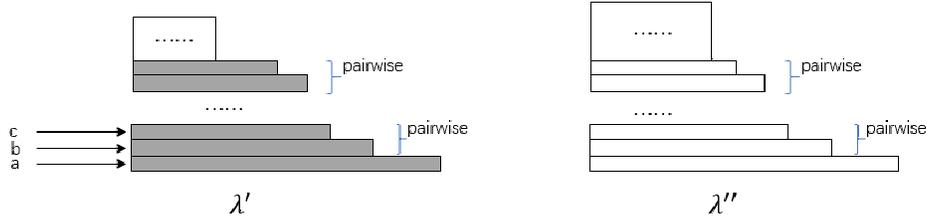}
  \end{center}
  \caption{ Partitions $\lambda^{'}$ and $\lambda^{''}$ are  in the $B_n$ and $C_n$ theories, respectively.}
  \label{f0}
\end{figure}
$(\lambda^{'}, \lambda^{''})$ is a  rigid semisimple   operator in the $B_n$ theory as shown in Fig.(\ref{f0}). The partition $\lambda$ is constructed   inserting rows of $\lambda^{'}$  into $\lambda^{''}$  one by one. This procedure would be helpful to understand the construction of the partition $\mu$.

\begin{figure}[!ht]
  \begin{center}
    \includegraphics[width=4.9in]{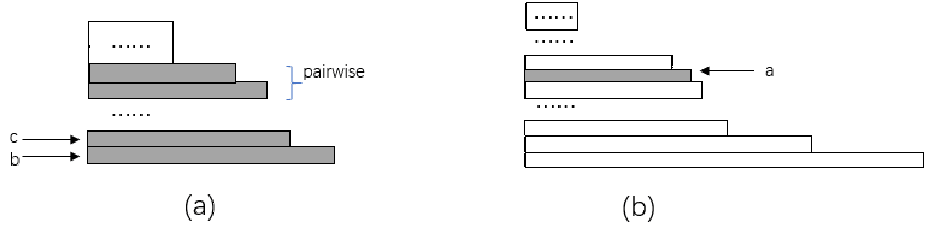}
  \end{center}
  \caption{ The first row $a$ of $\lambda^{'}$ is inserted into the partition $\lambda^{''}$.}
  \label{f1}
\end{figure}
\begin{figure}[!ht]
  \begin{center}
    \includegraphics[width=4.9in]{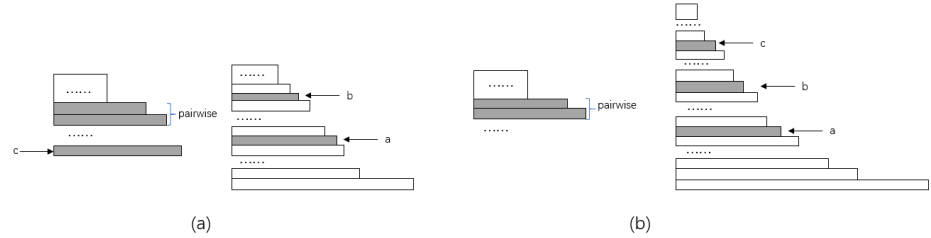}
  \end{center}
  \caption{ Pairwise  rows $b$ and $c$ of $\lambda^{'}$ are inserted into the partition $\lambda^{''}$.}
  \label{f2}
\end{figure}
The insertion of  the first row   $a$  is shown in Fig.(\ref{f1}).  The insertions of  $b$ and $c$ of a  pairwise rows  are shown in Figs.(\ref{f2})(a) and (b).
The row  $b$  can be placed   between or under a  pairwise rows, and these three rows form  the lower boundary of the block in the box as shown in Fig.(\ref{bl}). Similarly, the row   $c$  can be placed   between or above  a pairwise rows, and these three rows form   the  upper  boundary of the block in the box. The rows between the boundaries are pairwise rows of $\lambda^{''}$. Since the first row of each pairwise rows is even, they can be regard as the pairwise rows of a $B_n$ partition.   Under these constructions,  each block consist of pairwise rows of $\lambda^{'}$ and $\lambda^{''}$, which means the number of total boxes is even. These processes  continue until all the rows of $\lambda^{'}$ are  inserted into  $\lambda^{''}$.  The partition $\lambda=\lambda^{'}+\lambda^{''}$  is decomposed into several  blocks, consisting of types of blocks such as $I,II,III$ as shown in  Fig.(\ref{bl}). \textbf{We would refine this model in the end.}
\begin{figure}[!ht]
  \begin{center}
    \includegraphics[width=4.9in]{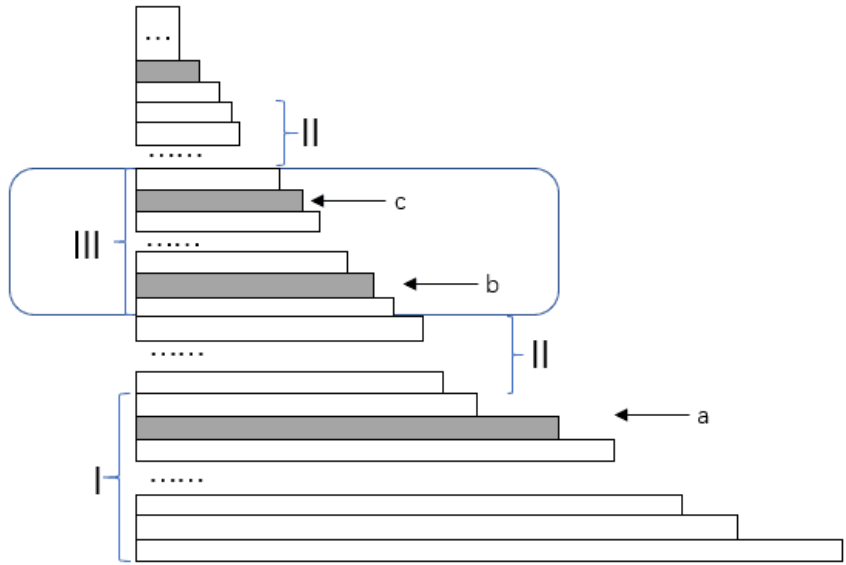}
  \end{center}
  \caption{ Partition $\lambda$ and  blocks.}
  \label{bl}
\end{figure}

\begin{flushleft}
{$III$ \textbf{type block with operators $\mu_{e11}$, $\mu_{e12}$, $\mu_{e21}$, and $\mu_{e22}$}}
\end{flushleft}

\begin{figure}[!ht]
  \begin{center}
    \includegraphics[width=4.9in]{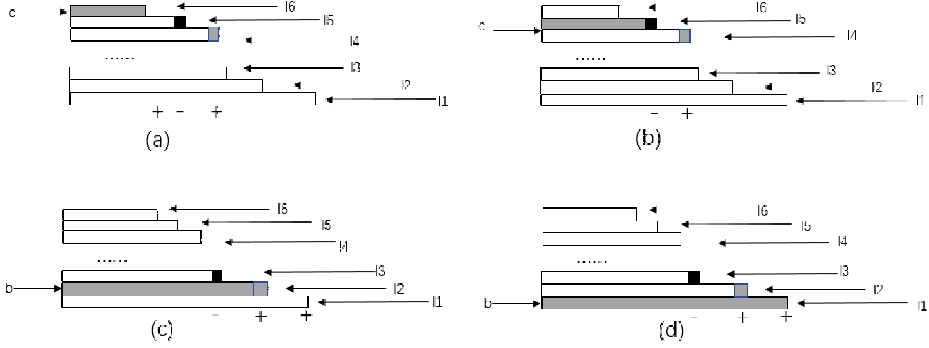}
  \end{center}
  \caption{ An even pairwise  rows $c$, $b$ of   $\lambda^{'}$ are inserted into the partition $\lambda^{'' }$. }
  \label{be}
\end{figure}
The fingerprint can be constructed by summing  the contribution of each block.  First we consider the block of type $III$ as shown in Fig.(\ref{bl}).
Let the  pairwise rows $b$ and $c$  of $\lambda^{'}$ in the block are even   as shown in Fig.(\ref{be}).
The row $c$ is the second row of the  pairwise rows, forming  the upper boundary of the block with an odd pairwise rows of $\lambda^{''}$.  The row $b$ is the first row of  the  pairwise rows,  forming the lower boundary of the block with an odd pairwise rows of $\lambda^{''}$. The number of boxes of rows   above the block is even. And the number of boxes  under the first row of the block is also even. \textit{So the sign of $p(i)$ corresponding to the last part of a low is determined by  the number of boxes  in the block before the part.}  Since the row $c$ inserted is even,   the number of boxes of  the upper boundary  is even. So the sign of the last part of each  row is the same as  that of  row of pairwise rows of the $B_n$ partition. And we  only need to determine the changes of the upper boundary and  lower boundary under the map $\mu$.

The first row and the last row will not change under the map $\mu$.
\begin{enumerate}
\item The height   of $l6$ is  even, so the sign of the last part of the row $l6$  is $'+'$.
\item The total number of  boxes  of each block is even, so the sign of the last  part of the block  is $'+'$ in Fig.(\ref{be}).
\end{enumerate}
These  results is right even though an odd pairwise are inserted into $\lambda^{''}$.

For   Fig.(\ref{bo})(a),  row $c$ is above a pairwise rows.
\begin{enumerate}
\item The height   of  $l5$ are  odd, so the sign of the last part of row $l5$ is $'-'$.
\item According to Fig.(\ref{p2}), the  sign of the last part of  row $l4$ is $'-'$.
\end{enumerate}
If the row $c$ is inserted  into the middle of the pairwise rows, we can reach to  Fig.(\ref{be})(b) similarly.

For   Fig.(\ref{bo})(c), the number of boxes above the row $l3$ is even. So the sign of the last parts of $l3$, $l2$,  and $l1$ are determined by the number of boxes  before the last part.
\begin{enumerate}
  \item The  height  of the row $l3$ is  odd, so the sign of the last part  $l3$ is $'-'$, which means  the last box  of the row is deleted under the map $\mu$.
  \item The height  of  the row  $l2$ is even,  so   the  sign of the last part of $l2$ is $'-'$, which means we should append a box as the last part of the row under the map $\mu$.
\end{enumerate}
 If the row $b$ is inserted into  the middle of the pairwise rows, we can  reach to  Fig.(\ref{be})(d) similarly.

According to  Fig.(\ref{be}), the first  and the last rows of the block do not change under the map $\mu$. The rows $l2$  and $l3$   change as an odd  pairwise rows in the $B_n$ theory as well as the rows $l4$  and $l5$.  If we replace the rows $l2$  and $l3$ by  even rows, they would change as an even  pairwise rows in the $B_n$ theory  as well as  the rows $l4$  and $l5$.

According to the places where the rows $b$ and $c$ are inserted into $\lambda^{'' }$, we define four operators $\mu_{e11}$, $\mu_{e12}$, $\mu_{e21}$, and $\mu_{e22}$ as shown in Fig.(\ref{ob}), corresponding to all  the combinations of the upper boundaries Fig.(\ref{bo})(a) Fig.(\ref{bo})(b) and the low boundaries  Fig.(\ref{bo})(c),  Fig.(\ref{bo})(d). For example, the upper boundary of the operators $\mu_{e11}$ is Fig.(\ref{be})(a) and the low boundary is Fig.(\ref{be})(d).

\begin{flushleft}
{$III$ \textbf{type block with operators $\mu_{o11}$, $\mu_{o12}$, $\mu_{o21}$, and $\mu_{o22}$ }}
\end{flushleft}

Next, we consider  an odd pairwise  rows in $\lambda^{'}$ is inserted   into  $\lambda^{'' }$, comprising   a block as shown in Fig.(\ref{bo}).
The row $c$ is the second row of the pairwise rows, forming  the upper boundary of the block with an even pairwise rows of $\lambda^{''}$.  The row $b$ is the first row of a pairwise rows,  forming the lower boundary of the block with an even pairwise rows of $\lambda^{''}$. So the numbers of boxes of  lower boundary  and upper boundary of block are odd.  The numbers of boxes   above the block are even. \textit{So the sign of the last box of a low  in the block is determined by  the number of boxes before the part.}

 \begin{figure}[!ht]
  \begin{center}
    \includegraphics[width=4.9in]{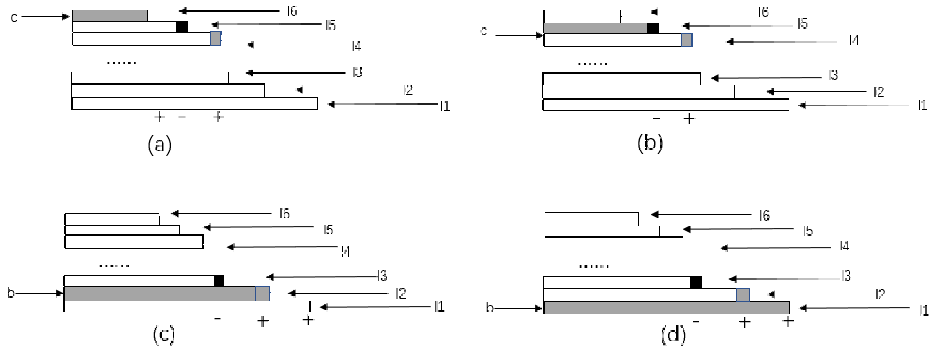}
  \end{center}
  \caption{ An odd pairwise  rows $c$, $b$ of $\lambda^{'}$ are inserted into the partition $\lambda^{'' }$. }
  \label{bo}
\end{figure}
 First, we determine the changes of the upper boundary  under the map $\mu$ for   Fig.(\ref{bo})(a).
 \begin{enumerate}
   \item The height   of the row $l6$ is  even, so the sign of the last box of row $l6$  is $'+'$.
   \item The number of boxes of $l6$ is odd.   The height   of the row $l5$ is  odd and   the number of boxes of the row $l5$  is even. So the sign of the last part of row $l5$  is $'-'$.
   \item According to Fig.(\ref{p2}), the  sign of the last box of  row $l4$ is $'-'$.
 \end{enumerate}
 If the row $c$ is inserted  into the middle of the pairwise rows, we can reach to  Fig.(\ref{bo})(b) similarly.

Next we determine changes of  pairwise rows of $\lambda^{'' }$ between the upper boundary and the lower boundary in the block.  Since  the number of boxes of   upper boundary of block is odd, the even  pairwise rows and the odd pairwise rows exchange roles compared with Fig.(\ref{bo}). So we should append a box as the last part of the first row of an even pairwise rows and delete the last box of the second one. While   rows of an odd pairwise rows do not change.

Finally  we   determine the changes of the   lower boundary under the map $\mu$ for   Fig.(\ref{bo})(c). The number of boxes above the row $l3$ is odd since the inserted row $c$ in the upper boundary is odd.
The sign of the last boxes of $l3$, $l2$,  and $l1$ are determined by the number of boxes  before the corresponding part.
\begin{enumerate}
  \item  The height  of the row $l3$ is  odd  and the length  is even. So the sign of the last box of $l3$ is $'-'$.
  \item The height of  the row  $l2$ is even  and the length  is odd,  so the   the  sign of the last box of $l2$ is $'-'$.
\end{enumerate}
If the row $b$ is inserted into  the middle of the pairwise rows, we can  reach to the  Fig.(\ref{be})(d) similarly.  Since the block consist of pairwise rows of $\lambda^{' }$  and $\lambda^{'' }$, the total number boxes of the block is even, which means the sign of the last  box of the block  is $'+'$ in  Fig.(\ref{bo})(a), (b), (c), and (d).

According to  Fig.(\ref{bo}), the first row and the last row of the block do not change under the map $\mu$. The rows $l2$  and $l3$   change as an odd  pairwise rows in the $B_n$ theory as well as the rows $l4$  and $l5$.  If we replace the rows $l2$  and $l3$ by two odd rows, they change as an even  pairwise rows in the $B_n$ theory  as well as  the rows $l4$  and $l5$. Summary, compared with Fig.(\ref{be}),  nothing changes in Fig.(\ref{bo}) excepting the even  pairwise  and the odd pairwise exchange roles. Similarly, we can define   operators $\mu_{o11}$, $\mu_{o12}$, $\mu_{o21}$, and $\mu_{o22}$.

\begin{flushleft}
{\textbf{Fingerprint of $III$ type block}}
\end{flushleft}

\begin{figure}[!ht]
  \begin{center}
    \includegraphics[width=4.9in]{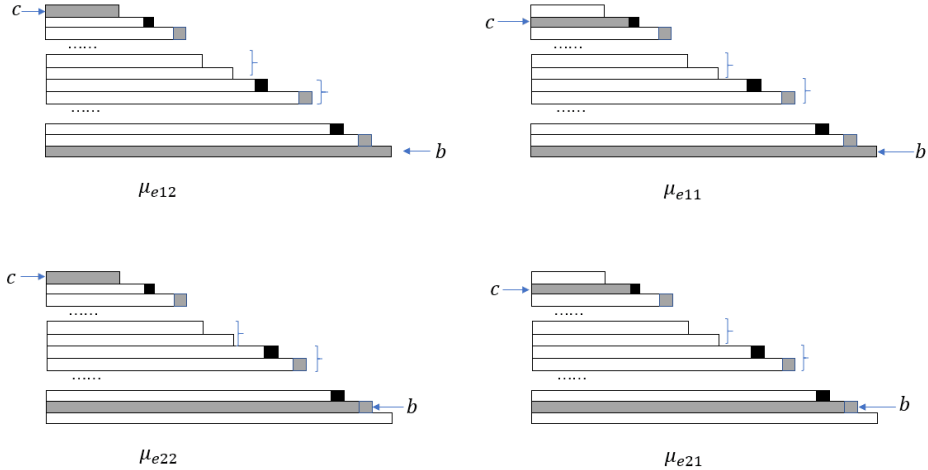}
  \end{center}
  \caption{ $III$ type block.}
  \label{ob}
\end{figure}
  Now, we calculate the fingerprints of the blocks as shown in   Fig.(\ref{ob}).
A comparison  of  a $B_n$ partition  and the $III$ type block  is made previously. The $III$ type block have the exactly the same structure with the $B_n$ partition under the map $\mu$, excepting the first and the last row. For example, the condition '$(iii)_{SO} \quad \lambda^{'}_{i} \quad\textrm{is odd}$'   can not be  satisfied for  even parts of the block since the height of the inserted row $b$ in $\lambda^{'}$ is even. For the pairwise rows of $\lambda^{''}$ in the block, the  height of the first row   is even.  So  we can calculate  the fingerprint  of  the $III$ type blocks  using the formulas in Section \ref{fu}. Since the first row change nothing under the map $\mu$, we should use the  formula (\ref{fbfbfbfb}) to calculate the fingerprint of the last part of the block. The row following the last row of a block do not change under the map $\mu$. But the height satisfy the condition '$(iii)_{SO} \quad \lambda^{'}_{i} \quad\textrm{is odd}$'. So the fingerprint can be calculated by the formula \ref{fi2}.
\begin{figure}[!ht]
  \begin{center}
    \includegraphics[width=4.9in]{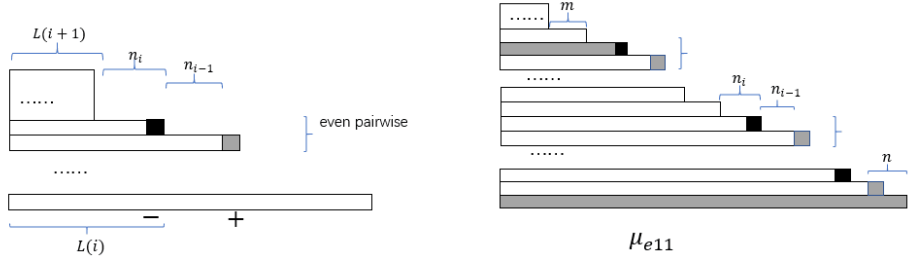}
  \end{center}
  \caption{ A comparison    between  a $B_n$  partition under the map $\mu$ and the  block $\mu_{e11}$.}
  \label{ccb}
\end{figure}

\begin{flushleft}
\textbf{$II$ type block}
\end{flushleft}
Next we  calculate the fingerprint of  $II$ type blocks  as shown in   Fig.(\ref{cf}). The height of the  first row of the  block is odd and all rows are in  pairwise pattern, which are exactly the same with a $C_n$ partition.      So the   fingerprint of this block can be calculated by using the formula (\ref{cfi}). We define the  operator $\mu_{II}$ corresponding to image of the block under the map $\mu$.
\begin{figure}[!ht]
  \begin{center}
    \includegraphics[width=4.9in]{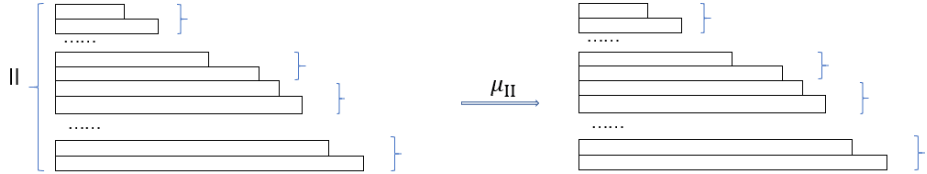}
  \end{center}
  \caption{   $II$ type block of $\lambda$ and its image under the map $\mu$.}
  \label{cf}
\end{figure}

\begin{flushleft}
\textbf{$I$ type block}
\end{flushleft}

For $I$ type block,  the first row $a$ of $\lambda^{'}$ is inserted into  $\lambda^{''}$ as shown in   Fig.(\ref{bi}) or  the first row  of $\lambda^{''}$ is inserted into  $\lambda^{'}$ .
 The row $a$is above an even pairwise rows in Fig.(\ref{bi})(a) and  is between  an even pairwise rows in Fig.(\ref{bi})(b).
 All  the rows are in pairwise pattern except the first  row  of $\lambda^{'}$ which  is odd and the first row of $\lambda^{''}$ which is even according to Proposition \ref{Pb} and \ref{Pc}. So the number of the boxes of the $I$ type block is odd.

Under the map  $\mu$, the images of the first row and the last row of $I$ type block  are independent of the position of $a$    which are  similar to $III$ type block.
\begin{itemize}
  \item Since the number of the boxes of the $I$ type block is odd, the sign of the last part of the first row is $'-'$, which means we should delete a box at the end of the first row according to Lemma \ref{ff}.
  \item Since  the number of the boxes above $I$ type block is even and the height of the last row is even, the sign of the last box of the last row  is $'+'$, which means the last row does not change under the map $\mu$. \end{itemize}

For the other rows, the arguments for the partition $\mu$ are exactly the same with the blocks in  Fig.(\ref{bo}). Comparing   Fig.(\ref{bi})(a) with  Fig.(\ref{bo})(a), we define the operator  $\mu_{o2}$ corresponding to Fig.(\ref{bi})(a). Comparing   Fig.(\ref{bi})(b) with  Fig.(\ref{bo})(b), we can define the operator  $\mu_{o1}$ corresponding to Fig.(\ref{bi})(b).
\begin{figure}[!ht]
  \begin{center}
    \includegraphics[width=4.9in]{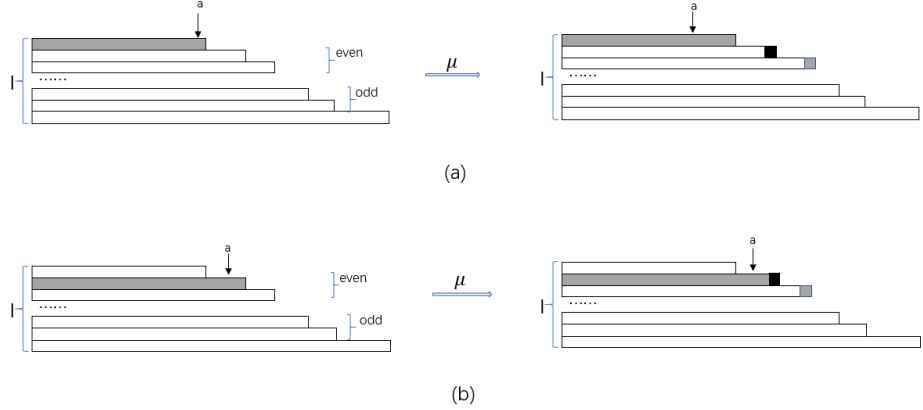}
  \end{center}
  \caption{ The first   $I$ type block of $\lambda$ and its image under  $\mu$.}
  \label{bi}
\end{figure}

If the first row of $\lambda^{''}$ is inserted into  $\lambda^{'}$ in the $I$ type block, the rows of  $\lambda^{'}$ are represented by white rows and the rows of  $\lambda^{'}$ are represented by gray rows. Then  the $I$ type block is exactly  the same as  shown in Fig.(\ref{bi}). The row  $a$ is even, which is  the first row of  $\lambda^{''}$. Comparing   Fig.(\ref{bi})(a) with  Fig.(\ref{ob})(a), we  define the operator  $\mu_{e1}$ corresponding to Fig.(\ref{bi})(a). Comparing   Fig.(\ref{bi})(b) with  Fig.(\ref{ob})(b), we define the operator  $\mu_{e2}$ corresponding to Fig.(\ref{bi})(b).

With the partition $\mu$ of the $I$ type block, the fingerprint can be calculated   using the formulas of  the fingerprint  of the $III$ type block directly.

\begin{flushleft}
\textbf{Special blocks}
\end{flushleft}

\begin{figure}[!ht]
  \begin{center}
    \includegraphics[width=4.9in]{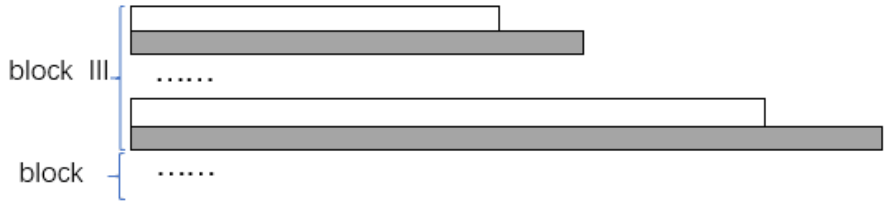}
  \end{center}
  \caption{ Blocks of  $\lambda$.}
  \label{ebo}
\end{figure}
 We ignore one situation in the above discussions.    If the first row of a block is in the partition $\lambda^{'}$ and  the second one  is  in the partition $\lambda^{''}$, then the row under the block  would be in another block as shown in  Fig.(\ref{ebo}). However, if the first two rows of a block are in the partition $\lambda^{'}$, the situation become complicated as shown in Fig.(\ref{eb}). The fingerprint of the $III$ type block  can be still calculated by previous formulas except the first row. We  determine the block $S$ by including one row in $\lambda^{'}$ and even number of rows in $\lambda^{''}$ as little as possible.
\begin{figure}[!ht]
  \begin{center}
    \includegraphics[width=4.9in]{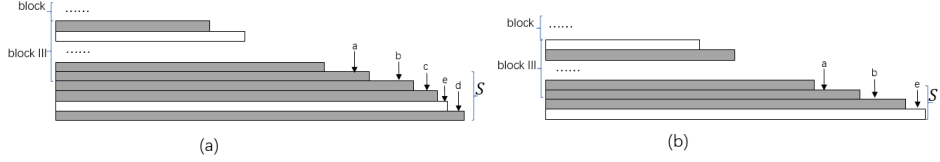}
  \end{center}
  \caption{ $S$ type block.}
  \label{eb}
\end{figure}
Unfortunately, we could not use the operators $\mu_{e11}$, $\mu_{e12}$, $\mu_{e21}$,  $\mu_{e22}$ and their odd versions to  determine  the images of  the blocks $S$ under the map $\mu$. So we must calculate  the partition $\mu$ of block $S$ case by case. Since the block $S$ and the following $III$ type block   are consist of pairwise rows of  $\lambda^{'}$ and $\lambda^{''}$,  the sign of the last box of the  first row of the block $S$ is $'+'$, which means this first row would not  change under the map $\mu$.

\subsection{$D_n$ theory}
For a  rigid semisimple   operator $(\lambda^{'}, \lambda^{''})$ in the  $D_n$ theory, the  fingerprint can be constructed  by  the same  arguments as that of    a partition in the $B_n$ theory.  The $I$ type block of   $\lambda=\lambda^{'}+\lambda^{''}$ need to be paid more attentions since both $\lambda^{'}$ and $\lambda^{''}$  are in the $D_n$ theory and their  first rows    are even.  We  define operators $\mu_{e1}$ and  $\mu_{e2}$  corresponding to Fig.(\ref{bi})(a) and   Fig.(\ref{bi})(b) for the $I$ type block, respectively.

Since the total number of the boxes of $\lambda$ is even, the sign of the last part of $\lambda$ is $'+'$, which means nothing change under the map $\mu$ for the first row of $\lambda$.
The fingerprints of other parts can be calculated by the same exactly formulas as in the $B_n$ theory.

\subsection{$C_n$ theory}
 \begin{figure}[!ht]
  \begin{center}
    \includegraphics[width=4.9in]{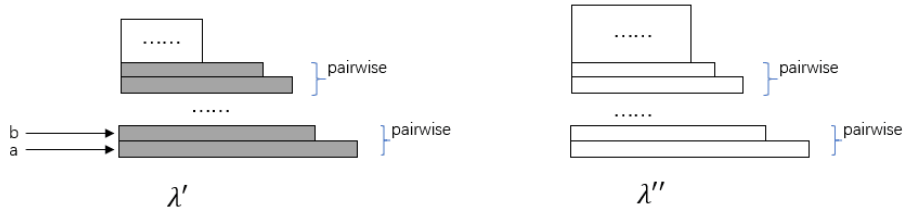}
  \end{center}
  \caption{Partitions $\lambda^{'}$ and $\lambda^{''}$ in the $C_n$ theory }
  \label{c0}
\end{figure}
For a  rigid semisimple   operator $(\lambda^{'}, \lambda^{''})$ in the  $C_n$ theory,  both $\lambda^{'}$ and $\lambda^{''}$  are in the $C_n$ theory.  The first two rows of   $C_n$ partitions are  in  pairwise pattern as shown in Fig.(\ref{c0}).
Compared   Fig.(\ref{ccc0}) with Fig.(\ref{bl}), the  construction of fingerprint  is even simpler than  that of  the $B_n$ and $D_n$ theories  because no $I$ type block appears in $\lambda$. And the  fingerprints of other parts can be calculated by the same exactly formulas as in the $B_n$ and $D_n$ theories.
\begin{figure}[!ht]
  \begin{center}
    \includegraphics[width=4.9in]{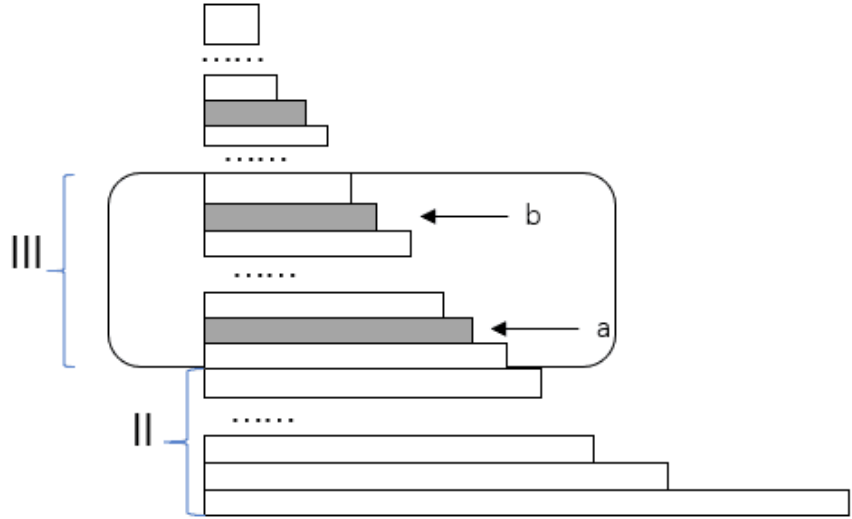}
  \end{center}
  \caption{Partition $\lambda$ }
  \label{ccc0}
\end{figure}

\section{Acknowledgments}
 The work of Qiao Wu has been supported by Key Research Center of Philosophy and Social Science of Zhejiang Province: Modern Port Service Industry and Creative Culture Research Center(15JDLG02YB). This work of Bao Shou was supported by a grant from  the Postdoctoral Foundation of Zhejiang Province.



\bigskip
\bigskip
\bigskip


\end{document}